\newtheorem {theorem} {Theorem}
\newtheorem {proposition} [theorem]{Proposition}
\newtheorem {corollary} [theorem]{Corollary}
\newtheorem {lemma}  [theorem]{Lemma}
\newtheorem {example} [theorem]{Example}
\newtheorem {remark} [theorem]{Remark}
\newtheorem {mtheorem} {Theorem}
\newcommand{\R}{\ensuremath{\mathbb{R}}}
\newcommand{\CC}{\mathcal{C}}
\newcommand{\CF}{\ensuremath{\mathcal{F}}}
\newcommand{\CG}{\ensuremath{\mathcal{G}}}
\newcommand{\CO}{\ensuremath{\mathcal{O}}}
\newcommand{\CZ}{\ensuremath{\mathcal{Z}}}
\newcommand{\CP}{\ensuremath{\mathcal{P}}}
\newcommand{\CQ}{\ensuremath{\mathcal{Q}}}
\newcommand{\ov}{\overline}
\newcommand{\la}{\lambda}
\newcommand{\ga}{\gamma}
\newcommand{\G}{\Gamma}
\newcommand{\f}{\varphi}
\newcommand{\al}{\alpha}
\newcommand{\be}{\beta}
\newcommand{\A}{\ensuremath{\mathcal{A}}}
\newcommand{\C}{\ensuremath{\mathcal{C}}}
\newcommand{\bx}{{\bf x}}
\newcommand{\bg}{{\bf g}}
\newcommand{\sgn}{\mathrm{sign}}
\newcommand{\De}{\Delta}
\newcommand{\inte}{\mathrm{int}}
\def\p{\partial}
\def\e{\varepsilon}
\title[Stability of smooth branches of periodic solutions]
{On the stability of smooth branches of periodic solutions\\ for higher order perturbed differential equations}
\author[D.D. Novaes and M.R. C\^{a}ndido$^{1}$]
{Murilo R. C\^{a}ndido$^{1}$ \and Douglas D. Novaes$^{2}$}
\address{Departamento de Matem\'{a}tica - Instituto de Matemática, Estatística e Computação Científica (IMECC) - Universidade Estadual de Campinas (UNICAMP), Rua S\'{e}rgio Buarque de Holanda, 651, Cidade
Universit\'{a}ria Zeferino Vaz, 13083--859, Campinas, SP, Brazil}\email{$^{1}$candidomr@ime.unicamp.br,$^{2}$ddnovaes@unicamp.br}
\subjclass[2010]{34C25,34C29,34D20}
\keywords{periodic solutions, asymptotic stability properties, averaging theory, k-hyperbolic matrices}
\begin{document}
\allowdisplaybreaks

\maketitle

\begin{abstract}
The averaging method combined with the Lyapunov-Schmidt reduction provides sufficient conditions for the existence of periodic solutions of the following class of  perturbative $T$-periodic nonautonomous differential equations $x'=F_0(t,x)+\varepsilon F(t,x,\varepsilon)$. Such periodic solutions bifurcate from a manifold $\CZ$ of periodic solutions of the unperturbed system $x'=F_0(t,x)$. Determining the stability of this kind of periodic solutions involves the computation of eigenvalues of matrix-valued functions $M(\varepsilon)$, which can done using the theory of $k$-hyperbolic matrices. Usually, in this theory, a diagonalizing process of $k$-jets of $M(\varepsilon)$ must be employed and no general algorithm exists for doing that. In this paper, we develop an alternative strategy for determining the stability of the periodic solutions without the need of such a diagonalization process, which can work even when the diagonalization is not possible.  Applications of our result for two families of $4$D vector fields are also presented.
\end{abstract}

\section{Introduction}

 The Averaging Method is a classical tool for providing asymptotic estimates for solutions of non-autonomous differential equations given in the following standard form 
 \begin{equation}\label{eq:e1}
	x^\prime = \sum_{i=1}^k\e^i F_i(t,x)+\e^{k+1}R(t,x,\e),
\end{equation}
	where $F_i\colon\R\times \Omega\to \R^n$, $i=1,\ldots,k$, and $R\colon\R\times \Omega\times[0,\e_0]\to \R^n$ are smooth functions $T-$periodic in the variable $t$, being $\Omega$ an  open subset of $\R^n$ and $\e_0 > 0$. Such asymptotic estimates are given in terms of solutions of a ``truncated averaged equation''. For an introduction of the averaging theory, the interested reader is referred to \cite{SVM}. 

In particular, the averaging method has been proven to be a very useful tool for studying periodic solutions (see \cite{chicone06,guckenheimer13,SVM}). In short, it provides a sequence of functions  $\bg_i:\Omega\rightarrow \R^n$, $i=0,1,\ldots,k$, with $\bg_0=0$, each one called {\it $i$th-averaged function}, for which the following result holds:

\begin{theorem}\label{thm:intro}
Suppose that for some $\ell\in\{1,2,\ldots,k\}$,  $\bg_0=\ldots=\bg_{\ell-1}=0$ and $\bg_{\ell}\neq0$, and assume that $z^*\in\Omega$ is a simple zero of $\bg_{\ell}$. Then, the differential equation \eqref{eq:e1} has an isolated $T$-periodic solution $\f(t,\e)$ satisfying $\f(t,\e)\to z^*$ as $\e\to0$. In addition, if all the eigenvalues of $\partial \bg_{\ell}(z^*)$ has negative real part, then the periodic solution $\f(t,\e)$ is asymptotically stable. Otherwise, if $\partial \bg_{\ell}(z^*)$ has an eigenvalue with positive real part, then the periodic solution $\f(t,\e)$ is unstable.
\end{theorem}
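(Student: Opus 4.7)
The plan is to work with the $T$-time flow of \eqref{eq:e1} and study the associated displacement (Poincar\'{e}) map. Denote by $x(t,z,\e)$ the solution of \eqref{eq:e1} with $x(0,z,\e)=z$, set $P(z,\e)=x(T,z,\e)$, and let $\Delta(z,\e)=P(z,\e)-z$. The averaging machinery (see, e.g., \cite{SVM}) gives an asymptotic expansion of the form
\begin{equation*}
\Delta(z,\e)=\sum_{i=1}^{k}\e^i\,\bg_i(z)+\e^{k+1}\bg_{k+1}(z,\e),
\end{equation*}
where $\bg_{k+1}$ is continuous up to $\e=0$. I would take this expansion as the starting point: the $i$th averaged functions $\bg_i$ are precisely the coefficients in this displacement expansion, so the first part of the statement follows from the implicit function theorem applied to $\e^{-\ell}\Delta(z,\e)=\bg_\ell(z)+\e\,\wt{R}(z,\e)$, after using the hypothesis $\bg_0=\cdots=\bg_{\ell-1}=0$ and the fact that $z^*$ is a simple zero of $\bg_\ell$, i.e.\ $\det\partial\bg_\ell(z^*)\neq 0$. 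This produces a continuous branch $z(\e)\to z^*$ with $\Delta(z(\e),\e)=0$, and hence the isolated $T$-periodic solution $\f(t,\e)=x(t,z(\e),\e)$ with $\f(t,\e)\to z^*$.

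For the stability statement, the plan is to reduce everything to the monodromy matrix $M(\e):=\partial_z P(z(\e),\e)$, whose eigenvalues (the Floquet multipliers of $\f(\cdot,\e)$) govern stability of the periodic orbit in the standard way: asymptotic stability when all multipliers lie strictly inside the unit disk, instability whenever one lies strictly outside. Differentiating the expansion above in $z$ and evaluating at $z=z(\e)$, I would obtain
\begin{equation*}
M(\e)=I+\e^{\ell}\,\partial\bg_{\ell}(z^*)+O(\e^{\ell+1}),
\end{equation*}
using that $\partial\bg_i\equiv 0$ for $i<\ell$ (because $\bg_i\equiv 0$ for such $i$) and the continuity $z(\e)\to z^*$. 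The eigenvalues of $M(\e)$ thus admit, to leading order, expansions of the form $\mu_j(\e)=1+\e^{\ell}\lambda_j+o(\e^\ell)$, where $\lambda_1,\dots,\lambda_n$ are the eigenvalues of $\partial\bg_\ell(z^*)$.

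From here the dichotomy follows from a direct computation: for small $\e>0$,
\begin{equation*}
|\mu_j(\e)|^2=1+2\e^{\ell}\,\mathrm{Re}(\lambda_j)+o(\e^{\ell}),
\end{equation*}
so $\mathrm{Re}(\lambda_j)<0$ for all $j$ forces $|\mu_j(\e)|<1$ for $\e>0$ sufficiently small, giving asymptotic stability of $\f(\cdot,\e)$, while a single eigenvalue with $\mathrm{Re}(\lambda_j)>0$ forces $|\mu_j(\e)|>1$ and hence instability.

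The main obstacle is a point of hygiene rather than a conceptual one: justifying that the eigenvalues of $M(\e)$ can really be labeled by continuous branches $\mu_j(\e)$ with the prescribed leading behavior. When $\partial\bg_\ell(z^*)$ has repeated eigenvalues or nontrivial Jordan structure, the individual eigenvalues of $M(\e)$ need only be continuous (in general, H\"{o}lder) in $\e$, so one must invoke continuity of the spectrum (Rouch\'{e}/Puiseux-type arguments) to control them. However, for the purposes of this statement one only needs the sign of $\mathrm{Re}(\mu_j(\e)-1)$ for small $\e>0$, and this is governed by $\mathrm{Re}(\lambda_j)$ whenever $\mathrm{Re}(\lambda_j)\neq 0$; the degenerate case $\mathrm{Re}(\lambda_j)=0$ is precisely the one excluded by the hypotheses, which is exactly the issue that the $k$-hyperbolic theory later in the paper is designed to address.
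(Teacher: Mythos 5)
Your proposal is correct and follows essentially the same route the paper takes for this theorem: expand the displacement map as $\Delta(z,\e)=\e^{\ell}\bg_{\ell}(z)+\CO(\e^{\ell+1})$, apply the implicit function theorem at the simple zero $z^*$ to get the branch $z(\e)$, and read off stability from the Floquet multipliers of $\p_z\Pi(z(\e),\e)=I_n+\e^{\ell}\partial\bg_{\ell}(z^*)+\CO(\e^{\ell+1})$ via $|\mu_j(\e)|^2=1+2\e^{\ell}\Re(\lambda_j)+o(\e^{\ell})$. Your closing remark about controlling the eigenvalue branches by spectral continuity (Rouch\'e-type arguments) is exactly the point the paper elaborates later in the proof of its Theorem A, and the degenerate case $\Re(\lambda_j)=0$ is indeed the one excluded here and treated by the $k$-hyperbolicity machinery.
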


In this work, we use the notation $\p$ for differentiation and $\p_x$ for partial derivative with respect to the variable $x$.

Theorem \ref{thm:intro}  can be proven using basically two arguments. Let $x(t,z,\e)$ be the solution of the differential equation \eqref{eq:e1}  with initial condition $x(0,z,\e)=z$. First, it can be seen that for  $\e>0$ sufficiently small, the solution $x(t,z,\e)$ satisfies the following relationship
$$
x(T,z,\e)=z+\e^\ell\, \bg_\ell(z)+\CO(\e^{\ell+1}),
$$ 
see for instance \cite{LNT14,novaes2020higher}. 
Second, if $z^*$ is a simple zero of  $\bg_\ell(z)$, the Implicit Function Theorem provides a unique smooth branch $z(\e)=z^*+\CO(\e)$ satisfying 
$$
x(T,z(\e),\e)=z(\e).$$
 Consequently, $x(t,z(\e),\e)$ is a $T$-periodic solution of \eqref{eq:e1} and its stability can be determined by the stability of the fixed point $z(\e)$ of the Poincar\'{e} map
 $$\Pi(z,\e)=x(T,z,\e)$$ 
 which, under suitable hypothesis, can be provided by the eigenvalues of the Jacobian matrix $\partial\bg_\ell(z^*)$.  In fact, since we have
$$
\p_z\Pi(z(\e),\e)=I_n+\e^\ell \partial\bg_\ell(z^*)+\CO(\e^{\ell+1}),
$$
one can easily see that, for each eigenvalue $\lambda$ of $\partial\bg_\ell(z^*)$ satisfying $\mathrm{Re}(\lambda)>0\, (<0)$ there is an eigenvalue of $\p_z\Pi(z(\e),\e)$ satisfying $|\lambda(\e)|>1\, (<1)$ for all $\e\in(0,\e_0]$.

In Theorem \ref{thm:intro}, if one cannot ensure whether $\partial \bg_{\ell}(z^*)$ either has all its eigenvalues with negative real part or has an eigenvalue with positive real part, the stability of the periodic solution provided by Theorem \ref{thm:intro} can still be investigate in terms of the eigenvalues of a smooth matrix-valued function of the following kind 
\[
M(\e)=\partial \bg_{\ell}(z(\e))+\CO(\e).
\] 

The theory of $k$-hyperbolic matrices provides suitable conditions such that the eigenvalues of $M(\e)$  can be described by means of the eigenvalues of its $m$-jet, for some $1\leq m\leq k-\ell$. This approach has been used in several works (see, for instance, \cite{dilna2009uniqueness,montgomery1986existence,M06,MR2,MR1}).

The reader familiar with the averaging method for detecting isolated periodic solutions, may be aware that the hypothesis of Theorem \ref{thm:intro} favours the detection of periodic solutions, which the stability can be  described by a direct analysis of the non-singular Jacobian matrix $\partial \bg_{\ell}(z^*)$. However, some recent generalizations of the classical averaging method weakened such a hypothesis (see, for instance, \cite{BL, CLN17, LNT14,Novaes2022, NovaesSilva2021}). Therefore, the detection of periodic solutions bifurcating from  non-simple zeroes of the averaged function $\bg_{\ell} (z)$ became increasingly common, see for instance \cite{candido22zero,candido2019new,candido2020periodic}. 

In this case, one of the methods for determining the stability of the periodic solutions consists into finding a matrix 
$$
T(\e)=T_0+\e T_1+\cdots+\e^m T_m, 
$$
such that 
$$
T(\e)M(\e)T^{-1}(\e)=\sum_{i=0}^k\e^i\Lambda_i+\e^{k+1}\Lambda(\e),
$$ 
where  the matrix $\Lambda_i$ is diagonal for each $i=0,\dots,k$ and $\sum_{i=0}^k\e^i\Lambda_i$ is hyperbolic for all $\e\in(0,\e_0]$. The stability of the periodic solution will be, then, the same as the stability of the truncated matrix $\sum_{i=0}^k\e^i\Lambda_i$. One of the difficulties of this method is the lack of a general formula for obtaining the matrix $T(\e)$ resulting in the necessity of  performing long computations in order to find it for each periodic solution to be studied, see \cite[Section $3.2$]{M06}.  Moreover, in some cases it may be impossible to diagonalize any jet of the matrix $M(\e)$.

\subsection{Setting the problem}\label{setting}
In this paper, we are interested in a more general family of differential equations for which the averaging method also provides results on periodic solutions, namely
\begin{equation}\label{s1}
x'=F_0(t,x)+\sum_{i=1}^k \e^i F_i(t,x)+\e^{k+1}R(t,x,\e),
\end{equation}
where $F_i\colon\R\times \Omega\to \R^n$, $i=0,1,\ldots,k$, and $R\colon\R\times \Omega\times[0,\e_0]\to \R^n$ are smooth functions, $T-$periodic in the variable $t$, being $\Omega$ an open subset of $\R^n$ and $\e_0 > 0$.  Let $x(t,z,\e)$ be the solution of \eqref{s1}, and $x(t,z,0)$ the solution of the unperturbed system \begin{equation}\label{eq:unp}
x'=F_0(t,x).
\end{equation}  We assume that:
\begin{itemize}
\item[{\bf H1.}] there exists an open bounded set  $V\subset \R^m$, $m\leq n$, and a $C^{k+1}$ function $\beta:\ov V\rightarrow \R^{n-m}$ such that each solution of the unperturbed system \eqref{eq:unp}, with initial condition at the manifold
\[
\CZ=\{z_{\alpha}=(\alpha,\beta(\alpha)):\, \al\in \ov V\}\subset\Omega,
\]
 is $T$-periodic.

\item[{\bf H2.}] if $m<n$, define $\bg_0(z)=x(T,z,0)-z$ and let 
\begin{equation*}\label{Dg0}
\partial_z \bg_0(z_\al)=\left(\begin{array}{cc}*&\Gamma(\al)\\ *&\Delta(\al)\end{array}\right),
\end{equation*}
where $\Gamma(\al)$ and $\Delta(\al)$ are matrices with dimensions $m\times (n-m)$ and $(n-m)\times(n-m)$, respectively. We assume that $\det\left(\Delta(\al) \right)\neq 0$ for all $\al \in \ov{V}$. Notice that if $m=n$, the function $\beta(\al)$ does not appear and this hypothesis can be omitted.
\end{itemize}
In \cite{CLN17}, assuming Hypothesis {\bf H1} and {\bf H2}, the averaging method was combined with the Lyapunov-Schmidt reduction to provide sufficient conditions ensuring the existence of isolated $T$-periodic solutions of \eqref{s1} for $|\e|\neq0$ sufficiently small. In addition, the initial condition of such periodic solutions converges to the manifold $\CZ$ as $\e$ goes to $0$.  This result will be fairly discussed in Section \ref{Generic}. Accordingly, as our third main hypothesis, we assume that:
\begin{itemize}
\item[{\bf H3.}] for $\e\geq0$ sufficiently small and $\al^* \in V$, there exists a smooth branch of $T$-periodic solutions $\f(t,\e)$ of \eqref{s1}  satisfying $\f(0,\e)\to z_{\alpha^*}\in\CZ$,  as $\e\to 0$. 
\end{itemize}
Our  goal in this paper is to discuss how Theorem \ref{thm:intro} can be generalized in order to characterize the stability of periodic solutions satisfying hypothesis {\bf H3}.

\subsection{Main results} \label{main}
In order to state our main results, we need to establish some estimates for the solutions of the differential equation \eqref{s1}. Let $Y(t,z)$ be the principal fundamental matrix solution of the variational equation
\begin{equation}\label{vareq}
y'=\p_xF_0(t,x(t,z,0))y.
\end{equation}
By principal we mean $Y(0,z)=I_n$. Notice that, if one knowns the solution $x(t,z,0)$ of the unperturbed differential equation $x'=F_0(t,x)$ for every $z$ in a neighbourhood of $\CZ$, then we have $Y(t,z)=\partial_z x(t,z,0).$ 
For $\e>0$ sufficiently small and taking the initial point $\f(0,\e)$  in a neighbourhood of $\CZ$, we define
\begin{equation}\label{Yepsi}
Y_{\e}(t):=\p_z x(t,\f(0,\e),0)
\end{equation}
or still $Y_{\e}(t):=\widetilde Y(t,\f(0,\e))\widetilde Y(0,\f(0,\e))^{-1}$,  where $\widetilde Y(t,z)$ is any fundamental matrix solution of the variational equation \eqref{vareq}. The following Lemma provides an expansion in power series of $\e$ for the solutions of the differential equation \eqref{s1}.
\begin{lemma}[\cite{CLN17}]\label{popl1}
Let $x(t,z,\e)$ be the solution of the $\CC^{k+1}$  $T$-periodic  the differential equation \eqref{s1} such that
$x(0,z,\e)=z$. Then, there exists an open set $U_1 \subset\Omega$, with $\CZ\subset U_1$, and $\e_1>0$
such that, for each $z \in \ov U_1$ and $\e\in[-\e_1,\e_1]$, the solution $x(t, z, \e)$ is defined
on the interval $[0,t_{(z,\e)})$, with $t_{(z,\e)}>T$, and
\begin{equation*}\label{rfl}
x(t,z,\e)=x(t,z,0)+\sum_{i=1}^{k}\e^{i}\dfrac{y_i(t,z)}{i!}+\CO(\e^{k+1}),
\end{equation*}
where the functions $y_i$, for $1\leq i \leq k$,  are given recursively as
\begin{align}\label{smoothyi}
y_1(t,z)=&Y(t,z)\int_{0}^tY(s,z)^{-1}F_1(s,x(s,z,0))ds,\vspace{0.2cm} \nonumber\\
y_i(t,z)=&i!\,Y(t,z)\int_{0}^tY(s,z)^{-1}\Bigg(F_i(s,x(s,z,0))\\
&+\sum_{S'_i}\dfrac{1}{b_1!\,b_2!2!^{b_2}\cdots
b_{i-1}!(i-1)!^{b_{i-1}}}\p^{I'}F_{0}(s,x(s,z,0))\bigodot_{j=1}^{i-1}y_j(s,z)^{b_j}\nonumber\\
&+\sum_{l=1}^{i-1}\sum_{S_l} \dfrac{1}{b_1!\,b_2!2!^{b_2}\cdots
b_l!l!^{b_l}}\p^LF_{i-l}(s,x(s,z,0))
\bigodot_{j=1}^ly_j(s,z)^{b_j}\Bigg)ds.\nonumber
\end{align} 
\end{lemma}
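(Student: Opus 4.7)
The plan is to combine smooth dependence of ODE solutions on the parameter $\e$ with Taylor's theorem and variation of parameters. First, I would appeal to classical results on smooth dependence: since $\CZ$ is compact and every solution of \eqref{eq:unp} starting on $\CZ$ is defined on $[0,T]$, continuous dependence on initial data and parameters furnishes an open neighbourhood $U_1\supset\CZ$ and $\e_1>0$ such that $x(t,z,\e)$ is of class $\CC^{k+1}$ in $(t,z,\e)$ on a domain containing $[0,T]\times\ov{U_1}\times[-\e_1,\e_1]$, with lifespan $t_{(z,\e)}>T$. Writing
\[
y_i(t,z):=\p^i_\e x(t,z,\e)\Big|_{\e=0},\qquad i=1,\ldots,k,
\]
Taylor's theorem with integral remainder in $\e$ yields
\[
x(t,z,\e)=x(t,z,0)+\sum_{i=1}^{k}\e^{i}\,\dfrac{y_i(t,z)}{i!}+\CO(\e^{k+1})
\]
uniformly for $(t,z)\in[0,T]\times\ov{U_1}$.

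Next, I would characterize $y_i(t,z)$ recursively by differentiating the identity $\p_t x(t,z,\e)=F_0(t,x(t,z,\e))+\sum_{j=1}^k\e^j F_j(t,x(t,z,\e))+\e^{k+1}R(t,x(t,z,\e),\e)$ exactly $i$ times with respect to $\e$ and then setting $\e=0$. The left-hand side becomes $\p_t y_i(t,z)$. On the right-hand side, the Leibniz rule applied to $\e^j F_j$ produces a nonzero contribution only for $j\leq i$, equal to $\tfrac{i!}{l!}\,\p^{l}_\e F_{i-l}(t,x(t,z,\e))|_{\e=0}$ after substituting $l=i-j$; the remainder $\e^{k+1}R$ contributes nothing because $i\leq k$. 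Each inner derivative is then expanded via the partition form of Fa\`a di Bruno's formula,
\[
\p^l_\e G(t,x(t,z,\e))\Big|_{\e=0}=\sum_{S_l}\frac{l!}{b_1!\,b_2!2!^{b_2}\cdots b_l!\,l!^{b_l}}\,\p^{L}G(t,x(t,z,0))\bigodot_{j=1}^{l}y_j(t,z)^{b_j},
\]
where $S_l$ is the set of non-negative integer tuples $(b_1,\ldots,b_l)$ with $b_1+2b_2+\cdots+l b_l=l$ and $L=b_1+\cdots+b_l$. The $l!$ cancels against the $i!/l!$ prefactor, leaving clean coefficients of the form $i!/(b_1!\,b_2!2!^{b_2}\cdots)$. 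For the $l=i$ case, namely the expansion of $F_0$, I would isolate the unique partition with $b_i=1$, which yields the linear-in-$y_i$ piece $\p_x F_0(t,x(t,z,0))\,y_i(t,z)$, while the remaining partitions (characterized by $b_i=0$) constitute the sum over $S'_i$.

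Collecting these contributions produces the linear non-homogeneous variational equation
\[
\p_t y_i(t,z)=\p_x F_0(t,x(t,z,0))\,y_i(t,z)+i!\,\Psi_i(t,z),
\]
where $\Psi_i(t,z)$ is precisely the bracketed integrand appearing in \eqref{smoothyi}. Since the initial condition $x(0,z,\e)=z$ is independent of $\e$, we have $y_i(0,z)=0$, and variation of parameters with the principal fundamental matrix $Y(t,z)$ of \eqref{vareq} produces
\[
y_i(t,z)=i!\,Y(t,z)\int_0^t Y(s,z)^{-1}\Psi_i(s,z)\,ds,
\]
which is exactly the claimed recursive formula. The base case $i=1$ reduces correctly because both $S'_1$ and the inner range $l=1,\ldots,0$ are empty, leaving only the $F_1$ term.

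The principal obstacle is the purely combinatorial bookkeeping: organizing the full Fa\`a di Bruno expansion into the $F_0$ contribution (split into the linear piece in $y_i$ and the sum over $S'_i$) together with the $F_{i-l}$ contributions for $l=0,1,\ldots,i-1$, and verifying that the multinomial coefficients align with those in \eqref{smoothyi} after cancellation of the $l!$ factors. Uniformity of the $\CO(\e^{k+1})$ remainder is a direct consequence of the joint $\CC^{k+1}$ regularity of $x(t,z,\e)$ and the compactness of $[0,T]\times\ov{U_1}$.
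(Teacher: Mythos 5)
Your proposal is correct and follows exactly the standard route by which this lemma is established in the cited source \cite{CLN17} (the present paper states it without proof): smooth dependence on parameters plus Taylor's theorem in $\e$, then differentiating the equation $i$ times in $\e$ at $\e=0$ via Leibniz and the partition form of Fa\`a di Bruno, isolating the linear term $\p_xF_0\,y_i$ from the unique partition with $b_i=1$, and solving the resulting nonhomogeneous variational equation with $y_i(0,z)=0$ by variation of parameters. The combinatorial bookkeeping you describe (cancellation of the $l!$ against $i!/l!$, the $S'_i$ sum as the $b_i=0$ partitions of $F_0$'s expansion) is precisely what the reference carries out.
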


Here,  $S_l$ is the set of all $l$--tuples of non--negative integers
$(b_1,b_2,\cdots,b_l)$ which are solutions of the equation
$b_1+2b_2+\cdots+lb_l=l$ and $L=b_1+b_2+\cdots+b_l$. Moreover
$S'_{l}$ denotes the  $(l-1)$--tuples of non--negative integers
satisfying $b_1+2b_2+\cdots+(l-1)b_{l-1}=l$ and
$L'=b_1+b_2+\cdots+b_{l-1}$. These expressions can also be provided in terms of  Bell's polynomials, see \cite{N17}. The notation above for $S_l$ and $S_l'$ will be used several times along the paper.

Accordingly, for $\e>0$ sufficiently small, the Poincar\'e map at time $T$ can be written in the form 
\begin{equation}\label{pm}
\Pi(z,\e)=x(T,z,0)+\sum_{i=1}^{k}\e^{i}\dfrac{y_i(T,z)}{i!}+\CO(\e^{k+1}),
\end{equation}
and its Jacobian matrix becomes 
$$
\partial_z\Pi(z,\e)=Y(T,z)+\CG_k(z,\e)+\CO(\e^{k+1}),
$$
with $$\CG_k(z,\e)=\sum_{i=1}^{k}\e^{i}\dfrac{\partial_z y_i(T,z)}{i!}.$$

Throughout this paper, $J_{\mu}^{\e}f(z,\e)$ stands for the $\mu$-jet of the function $f(z,\e)$ in the variable $\e$. When $f$ is a function of a single variable, we shall only use $J_{\mu}$ for denoting its $\mu$-jet.

Our first main result is concerned about the stability of the periodic solution $\f(t,\e),$ given by Hypothesis {\bf H3}, when $m=n$. In this case, the manifold of initial conditions of periodic solutions in Hypothesis {\bf H1} is given by $\CZ=\ov V$ and the function $\beta(\al)$ does not appear.

\begin{mtheorem}[Case $m=n$]\label{teoa}
Suppose that hypotheses {\bf H1}, {\bf H2}, and {\bf H3} hold for $m=n$.
Consider the smooth matrix-valued function 
\begin{equation}\label{Aepsi}
\A(\e)=\CG_k(\f(0,\e),\e)
\end{equation} and 
let $\ell\in\{1,\ldots, k-1\}$ be the power of the first non-vanishing coefficient $A_{\ell}$ of its expansion around $\e=0$.
Assume that  $A_{\ell}$ has no multiple eigenvalues on the imaginary axis.  Given an integer $\mu\leq k-\ell$, an eigenvalue $\la(\e)$ of $J_{\mu}\big(\e^{-\ell}\A(\e)\big)$ is $C^k$ provided that $\Re(\la(0))=0$. In addition, the following statements hold.
\begin{itemize}
\item[a.] Assume that each eigenvalue $\la(\e)$ of $J_{\mu}\big(\e^{-\ell}\A(\e)\big)$ satisfies either $\Re( \la(0))<0$, or $\Re( \la(0))=0$ and $\Re(J_{\mu} \la(\e))<0$ for $\e\in(0,\e_0)$. Then, for $\e>0$ sufficiently small, the periodic solution $\f(t,\e)$ is asymptotically stable.

\item[b.] On the other hand, assume that there exists an eigenvalue $\la(\e)$ of  $J_{\mu}\big(\e^{-\ell}\A(\e)\big)$ such that either $\Re( \la(0))>0$, or $\Re( \la(0))=0$ and $\Re(J_{\mu} \la(\e))>0$ for $\e\in(0,\e_0)$. Then, for $\e>0$ sufficiently small, the periodic solution $\f(t,\e)$ is unstable.
\end{itemize}
\end{mtheorem}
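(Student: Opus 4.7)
The plan is to reduce the stability analysis of $\f(t,\e)$ to a spectral problem for the Jacobian of the Poincar\'e map $\Pi(z,\e)=x(T,z,\e)$ at $z=\f(0,\e)$, and then to extract the required spectral information from the $\mu$-jet of the matrix $\e^{-\ell}\A(\e)$ using classical eigenvalue perturbation theory.

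First, I would simplify $\partial_z\Pi(\f(0,\e),\e)$ using the hypothesis $m=n$. In that case $V\subset\R^n$ is an open set consisting entirely of initial conditions of $T$-periodic solutions of the unperturbed system, so $x(T,z,0)=z$ for every $z\in V$ and consequently $Y(T,z)=\partial_z x(T,z,0)=I_n$ on $V$. Hypothesis {\bf H3} gives $\f(0,\e)\to z_{\al^*}\in V$ as $\e\to 0$, and openness of $V$ yields $\f(0,\e)\in V$ for small $\e>0$; hence $Y(T,\f(0,\e))=I_n$. Substituting into \eqref{pm}, one obtains
\[
\partial_z\Pi(\f(0,\e),\e)=I_n+\A(\e)+\CO(\e^{k+1})=I_n+\e^\ell B(\e)+\CO(\e^{k+1}),
\]
where $B(\e):=\e^{-\ell}\A(\e)$ is smooth with $B(0)=A_\ell$. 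Asymptotic stability (resp.\ instability) of $\f(t,\e)$ is then equivalent to every (resp.\ some) eigenvalue $\sigma(\e)$ of $\partial_z\Pi(\f(0,\e),\e)$ satisfying $|\sigma(\e)|<1$ (resp.\ $|\sigma(\e)|>1$), and any such $\sigma(\e)$ has the form $1+\e^\ell\nu(\e)$ with $\nu(\e)$ an eigenvalue of $B(\e)+\CO(\e^{k-\ell+1})$.

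Next I would compare the eigenvalues of $B(\e)$ with those of its $\mu$-jet $N(\e):=J_\mu B(\e)$. Since $B(\e)-N(\e)=\CO(\e^{\mu+1})$, standard perturbation theory shows that to every simple eigenvalue of $B(0)=A_\ell$ corresponds a smooth branch of eigenvalues of $N(\e)$ and a matching smooth branch of $B(\e)+\CO(\e^{k-\ell+1})$ with the same $\mu$-jet. The assumption that $A_\ell$ has no multiple eigenvalues on the imaginary axis guarantees that every imaginary-axis eigenvalue of $A_\ell$ is simple, whereupon the Implicit Function Theorem applied to the characteristic polynomial $\det(\la I-N(\e))$ produces the asserted $C^k$ branch $\la(\e)$. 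Eigenvalues of $A_\ell$ with nonzero real part may be multiple, but continuity of the spectrum as an unordered multiset keeps the corresponding eigenvalues of $N(\e)$ and of $B(\e)+\CO(\e^{k-\ell+1})$ away from the imaginary axis for small $\e>0$, preserving the sign of their real parts.

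The last step is to translate the sign hypotheses into the sign of $|\sigma(\e)|-1$. For an eigenvalue of $N(\e)$ with $\Re(\la(0))<0$ (resp.\ $>0$) the corresponding $\nu(\e)$ satisfies $\Re(\nu(\e))<0$ (resp.\ $>0$) uniformly for small $\e>0$, so the associated $\sigma(\e)=1+\e^\ell\nu(\e)+\CO(\e^{k+1})$ lies strictly inside (resp.\ outside) the unit circle. For an eigenvalue with $\Re(\la(0))=0$, the matching of $\mu$-jets gives $J_\mu\Re(\nu(\e))=\Re(J_\mu\la(\e))$, and combining this with the identity
\[
|\sigma(\e)|^2-1=2\e^\ell\Re(\nu(\e))+\e^{2\ell}|\nu(\e)|^2+\CO(\e^{k+1})
\]
determines the sign of $|\sigma(\e)|-1$ for sufficiently small $\e>0$. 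Assembling the contributions of all eigenvalues yields asymptotic stability in~(a) and instability in~(b). The main obstacle I expect lies precisely in this last estimate: one has to control carefully the competition between $\e^\ell\Re(\nu(\e))$ and the higher-order contributions $\e^{2\ell}|\nu(\e)|^2$ and $\CO(\e^{k+1})$, so that the polynomial inequality $\Re(J_\mu\la(\e))<0$ on $(0,\e_0)$ truly forces $|\sigma(\e)|<1$ for small positive $\e$, keeping the orders of vanishing compatible with the bound $\mu\le k-\ell$.
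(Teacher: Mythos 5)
Your proposal takes essentially the same route as the paper's proof: reduce to the Jacobian $\partial_z\Pi(\f(0,\e),\e)=I_n+\e^\ell\,\big(\e^{-\ell}\A(\e)\big)+\CO(\e^{k+1})$, write its eigenvalues as $1+\e^\ell\nu(\e)$, match the characteristic polynomial of $\e^{-\ell}\A(\e)$ with that of its $\mu$-jet (Implicit Function Theorem at the simple imaginary-axis eigenvalues of $A_\ell$, Rouch\'e/continuity of the spectrum for the off-axis ones), and convert $\sgn\Re(\nu)$ into $\sgn(|\sigma|-1)$ via $|\sigma|^2-1=2\e^\ell\Re(\nu)+\e^{2\ell}|\nu|^2$. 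The final ``obstacle'' you flag — the competition between $2\e^\ell\Re(\nu(\e))$ and $\e^{2\ell}|\nu(\e)|^2$ when $\nu(0)$ is purely imaginary and nonzero — is a genuine subtlety, but the paper handles it with exactly the same identity and no further quantification of the order of vanishing of $\Re(J_\mu\la(\e))$, so your treatment is on par with the published argument.
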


Our second main result is concerned about the stability of the periodic solution $\f(t,\e)$ when $m<n$.  Before stating the result, we need to establish some terminology. 
Recall that $\varphi(t,\e)$, given by Hypothesis {\bf H3}, satisfies $\varphi(0,\e)=z_{\al^*}+\CO(\e)$, where $z_{\al^*}=(\al^*,\beta(\al^*))$.  Moreover, by   {\bf H1} we have that $x(T,(\alpha,\beta(\alpha)),0)=(\alpha,\beta(\alpha))$, for every $\alpha\in\ov V$. Thus, taking \eqref{Yepsi} into account and computing the derivative of the above expression at $\al=\al^*$, we obtain
\begin{equation} \label{A}
Y_0(T)\left(\begin{array}{c}I_m\\ \partial\beta(\al^*)\end{array}\right)=\left(\begin{array}{c}I_m\\ \partial\beta(\al^*)\end{array}\right).
\end{equation}
On the other hand, from hypothesis {\bf H2} we have that
\begin{equation}\label{B} 
Y_0(T)=\left(\begin{array}{cc}*&\Gamma(\al^*)\\ *&I_{n-m}+\Delta(\al^*)\end{array}\right). 
\end{equation}
Consequently, from \eqref{A} and \eqref{B} one can deduce that 
\[Y_0(T)=
\left(\begin{array}{cc}I_m-\Gamma(\al^*)\p\beta (\al^*)&\Gamma  (\al^*)\\-\Delta (\al^*) \p\beta  (\al^*) &I_{n-m}+\Delta  (\al^*) \end{array}\right).
\]

Now we are in a position to state our last fundamental hypothesis:
\begin{itemize}
\item[{\bf H4.}]   Assume that the matrix 
$$
I_{n-m}+\Delta(\al^*)-\p\beta (\al^*) \Gamma(\al^*),
$$
has no multiple eigenvalues on the unitary circle and $1$ is not one of its eigenvalues.
\end{itemize} 
Notice that, hypothesis {\bf H3}  implies that $0$ is not eigenvalue of $\Delta(\al^*)-\p\beta (\al^*) \Gamma(\al^*)$ and consequently 
$$
\det(\Delta(\al^*)-\p\beta (\al^*) \Gamma(\al^*))\neq 0.
$$

In order to state our next result, we introduce the matrix 
\begin{equation}\label{ML}
L=\left(\begin{array}{cc}
I_m+(\Delta(\al^*)-\p\beta (\al^*)\Gamma(\al^*))^{-1} \Gamma(\al^*) \partial\beta(\al^*)& \left(\Delta(\al^*)-\p\beta (\al^*)\Gamma(\al^*)\right)^{-1}\Gamma(\al^*)\\-\p\beta (\al^*)&\quad I_{m-n}
\end{array}\right),
\end{equation}
which is well defined by Hypothesis {\bf H4}, and  block-diagonalize the matrix $Y_0(T)$  as follows
\begin{equation}\label{LYIL}
\begin{split}L\, Y_0(T)\,L^{-1}=
&\left(\begin{array}{cc}
I_m & 0\\
0& I_{n-m}+\Delta(\al^*)-\p\beta (\al^*)\Gamma(\al^*)\end{array}\right).
\end{split}
\end{equation}
Additionally, we define the smooth matrix-valued functions $A(\e)_{m\times m}$, $B(\e)_{m\times n-m}$, $C(\e)_{n-m\times m}$, and $D(\e)_{n-m \times n-m}$ as follows
\begin{equation}\label{mp1}
\begin{split}
&\left(\begin{array}{cc}A(\e)&B(\e)\\C(\e)&D(\e)\end{array}\right):=L\Big(Y_{\e}(T)-Y_0(T)+ \CG_k(\f(0,\e),\e)\Big) L^{-1},
\end{split}
\end{equation}
where that the matrices $A(\e)$, $B(\e)$, $C(\e)$, and $D(\e)$ vanish at $\e=0$. Taking  these functions into account  we define  the matrices 
\begin{equation}\label{NM}
\begin{aligned}
N(\e)&=I_{n-m}+\Delta(\al^*)-\p\beta (\alpha^*)\Gamma(\al^*)+D(\e),\\
M(\e)&=A(\e)-B(\e)\Big(\Delta(\al^*)-\p\beta (\al^*)\Gamma(\al^*)+D(\e)\Big)^{-1}C(\e).
\end{aligned}
\end{equation}
Consider the power expansion of $M(\e)$ around $\e=0$ and assume that $M_{\ell}$ is its first non-vanishing coefficient, i.e 
\begin{equation}\label{Mep}
 M(\e)=\e^{\ell} M_{\ell}+\CO(\e^{\ell+1}),\quad M_{\ell}\neq0, \mbox{ and } \quad \ell \leq k.
 \end{equation}

For  $\omega,\la\in\mathbb{C}$, we consider the following polynomials
\begin{equation}\label{cor}
\CP(\omega;\e)=\det\Big[N(\e)-\omega\, I_{n-m}-C(\e)\big((1-\omega)I_m+\e\, A(\e)\big)^{-1} B(\e)\Big]
\end{equation}
and
\begin{equation}\label{est}
\begin{aligned}
&\CQ(\la;\e)= \det\Big[ M(\e)-\e^\ell \lambda I_{m}-\e^\ell B(\e)\sum^\infty _{k=1}\left((\Delta(\al^*)-\p\beta (\al^*)\Gamma(\al^*)+D(\e))^k\lambda^k\e^{\ell(k-1)}\right)C(\e)\Big].
\end{aligned}
\end{equation}
The following theorem is our second main result. 
\begin{mtheorem}[Case $m<n$]\label{teob}
Suppose that hypotheses {\bf H1}--{\bf H4} hold for $m<n$. Assume that $M_{\ell}$ $(\ell\leq k)$  has no multiple eigenvalues on the imaginary axis.  Given positive integers $\mu_1,\mu_2\leq k$, let $ \omega_i(\e)$ for $i\in\{1,\ldots,i^*\}$, $1\leq i^*\leq n-m$, and $ \lambda_j(\e)$ for $j\in\{1,\ldots,j^*\}$, $1\leq j^*\leq m$, be, respectively, all the continuous branches of roots of $J_{\mu_1}^{\e}\CP(\omega;\e)$ and $J_{\mu_2+m\, \ell}^{\e}\CQ(\lambda;\e)$ satisfying $|\omega_i(0)|=1$ and $\Re(\la_j(0))=0$, if they exist. Then, $\omega_i(\e)$ and $\la_j(\e)$,  for $i\in\{1,\ldots,i^*\}$ and $j\in\{1,\ldots,j^*\}$, are $C^k$ functions for $\e\geq0$ small. In addition,
\begin{itemize}
\item[a.] the periodic solution $\f(t,\e)$ is asymptotically stable for $\e>0$ sufficiently small, provided that the following two conditions hold:
\begin{itemize}
\item[a1.] All the eigenvalues of $I_{n-m}+\Delta(\al^*)-\p\beta(\al^*)\Gamma(\al^*)$ are contained within the closure of the unitary disk and, for each $i\in\{1,\ldots,i^*\}$, $|J_{\mu_1} \omega_i(\e)|<1$  for $\e\in(0,\e_0);$
\item[a2.] All the eigenvalues of $M_{\ell}$ are contained within the closure of the left half-plane and, for each $j\in\{1,\ldots,j^*\}$, $\Re(J_{\mu_2} \lambda_j(\e))<0$  for $\e\in(0,\e_0)$.
\end{itemize}

\item[b.] On the other hand, the periodic solution $\f(t,\e)$ is unstable for $\e>0$ sufficiently small, provided that at least one of the following two conditions holds:
\begin{itemize}
\item[b1.] $I_{n-m}+\Delta(\al^*)-\p\beta(\al^*)\Gamma(\al^*)$ has eigenvalues outside the unitary disk or $M_{\ell}$ has eigenvalues within the right-half plane;
\item[b2.] there exists $i\in\{1,\ldots,i^*\}$ such that $|J_{\mu_1} \omega_i(\e)|>1$ for $\e\in(0,\e_0)$ or  there exists $j\in\{1,\ldots,j^*\}$ such that $\Re(J_{\mu_2} \lambda_j(\e))>0$  for $\e\in(0,\e_0)$. 
\end{itemize}
\end{itemize}
In case $\omega_i$'s (resp. $\lambda_j$'s) do not exist, the above statements still hold with no consideration regarding them.
\end{mtheorem}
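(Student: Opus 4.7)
The plan is to identify the stability of $\varphi(t,\varepsilon)$ with the location of its Floquet multipliers, i.e.\ the eigenvalues of the monodromy matrix $\partial_z\Pi(\varphi(0,\varepsilon),\varepsilon)$. By Lemma~\ref{popl1} and the expansion \eqref{pm}, this Jacobian equals $Y_{\varepsilon}(T)+\CG_k(\varphi(0,\varepsilon),\varepsilon)+\mathcal{O}(\varepsilon^{k+1})$. Conjugating by the matrix $L$ from \eqref{ML}, which by \eqref{LYIL} block-diagonalizes $Y_0(T)$, and invoking the definitions \eqref{mp1} and \eqref{NM}, I would rewrite (up to order $\varepsilon^{k+1}$)
\[
L\,\partial_z\Pi(\varphi(0,\varepsilon),\varepsilon)\,L^{-1}=\left(\begin{array}{cc} I_m+A(\varepsilon)&B(\varepsilon)\\ C(\varepsilon)& N(\varepsilon)\end{array}\right)+\mathcal{O}(\varepsilon^{k+1}),
\]
where $A,B,C,D$ all vanish at $\varepsilon=0$. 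Since conjugation preserves eigenvalues, the stability problem reduces to locating the roots of the characteristic polynomial of this block matrix relative to the unit circle.

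By hypothesis \textbf{H4}, at $\varepsilon=0$ the spectrum consists of the eigenvalue $1$ with multiplicity $m$ plus the $n-m$ simple eigenvalues of $N(0)=I_{n-m}+\Delta(\alpha^*)-\partial\beta(\alpha^*)\Gamma(\alpha^*)$, none of which equals $1$. Continuity of the spectrum therefore separates the Floquet multipliers for small $\varepsilon$ into a \emph{far cluster} of $n-m$ branches $\omega(\varepsilon)$ tending to the eigenvalues of $N(0)$ and a \emph{near cluster} of $m$ branches tending to $1$. For the far cluster I would exploit that the top-left block $(1-\omega)I_m+A(\varepsilon)$ stays invertible for $\omega$ near any $\omega(0)\neq 1$ and $\varepsilon$ small, so the Schur complement with respect to it yields the reduced equation $\mathcal{P}(\omega;\varepsilon)=0$ from \eqref{cor}. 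Simplicity of each $\omega(0)$ combined with the implicit function theorem produces $C^k$ branches; those with $|\omega(0)|\neq 1$ remain strictly inside or outside the unit circle at leading order (handling part of a1/b1), whereas the finitely many branches $\omega_1(\varepsilon),\dots,\omega_{i^*}(\varepsilon)$ with $|\omega_i(0)|=1$ require inspecting higher-order terms, which is precisely the content of the $\mu_1$-jet $J_{\mu_1}^{\varepsilon}\mathcal{P}$.

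For the near cluster, I would introduce the scaling $\xi=1+\varepsilon^\ell\lambda$, dictated by $M(\varepsilon)=\varepsilon^\ell M_\ell+\mathcal{O}(\varepsilon^{\ell+1})$. The bottom-right block then reads $S(\varepsilon)-\varepsilon^\ell\lambda\,I_{n-m}$ with $S(\varepsilon)=\Delta(\alpha^*)-\partial\beta(\alpha^*)\Gamma(\alpha^*)+D(\varepsilon)$ invertible at $\varepsilon=0$ by \textbf{H3}. Expanding $(S-\varepsilon^\ell\lambda I)^{-1}$ as a Neumann series in $\varepsilon^\ell\lambda$ and recognizing $A-BS^{-1}C=M(\varepsilon)$, the Schur complement with respect to the bottom-right block equals $\mathcal{Q}(\lambda;\varepsilon)$ up to the nonvanishing factor $\det(S-\varepsilon^\ell\lambda I)$. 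As a polynomial in $\lambda$, $\mathcal{Q}$ has degree $m$ with leading coefficient of order $\varepsilon^{m\ell}$, which is exactly why the appropriate jet order of $\mathcal{Q}$ to read off the $\mu_2$-jet of each $\lambda_j(\varepsilon)$ is $\mu_2+m\ell$. The hypothesis that $M_\ell$ has no multiple eigenvalues on the imaginary axis makes the corresponding roots of $\varepsilon^{-m\ell}\mathcal{Q}$ simple at $\varepsilon=0$, and the implicit function theorem then delivers $C^k$ branches $\lambda_j(\varepsilon)$ whenever $\Re\lambda_j(0)=0$. The sign of $\Re(J_{\mu_2}\lambda_j(\varepsilon))$ finally decides $|1+\varepsilon^\ell\lambda_j(\varepsilon)|\lessgtr 1$ through $|1+\varepsilon^\ell\lambda|^2=1+2\varepsilon^\ell\Re(\lambda)+\mathcal{O}(\varepsilon^{2\ell})$, and collecting both clusters gives the stability dichotomy claimed.

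The step I expect to be the hardest is justifying $C^k$ smoothness of the branches $\lambda_j(\varepsilon)$ with $\Re\lambda_j(0)=0$: the polynomial $\mathcal{Q}(\lambda;\varepsilon)$ is defined through an infinite Neumann-type series whose $\lambda$-coefficients are themselves power series in $\varepsilon$, and its leading coefficient in $\lambda$ vanishes to order $\varepsilon^{m\ell}$, so the implicit function theorem must be applied only after dividing by $\varepsilon^{m\ell}$ and localizing around each simple eigenvalue of $M_\ell$. Once this smoothness (and the analogous statement for the $\omega_i$'s via $\mathcal{P}$) is in place, the remainder is bookkeeping: Schur-complement identities, $\varepsilon$-jet truncation, and the elementary modulus/real-part estimates above applied within the structure set up in Section~\ref{setting}.
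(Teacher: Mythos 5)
Your proposal is correct and follows essentially the same route as the paper: conjugation by $L$, Schur complements with respect to the two diagonal blocks to produce $\mathcal{P}$ and $\mathcal{Q}$, the scaling $\omega=1+\e^{\ell}\lambda$ with the Neumann-series identification of $M(\e)$, division by $\e^{m\ell}$ before applying the implicit function theorem to get the $C^k$ branches, and the elementary modulus estimates to transfer signs from $\Re(J_{\mu_2}\lambda_j)$ and $|J_{\mu_1}\omega_i|$ to the Floquet multipliers. The only cosmetic difference is that the paper separates the two clusters of multipliers by an explicit Rouch\'e argument on nested contours rather than by appealing to continuity of the spectrum, which amounts to the same thing.
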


 \subsection{Structure of the paper} Theorems \ref{teoa} and \ref{teob} are proven in Sections \ref{sec:teoa} and \ref{sec:teob}, respectively. In Section \ref{Generic}, some results from \cite{CLN17} are discussed and, based on that, asymptotic expansions for $\f(0,\e)$ are provided.  Finally, Section \ref{sec:app} is devoted to provide applications of Theorems \ref{teoa} and \ref{teob} for $4$D polynomial vector fields.

\section{Proof of Theorem \ref{teoa}}\label{sec:teoa}
 Consider the solution of the $T$-periodic non-autonomous differential equation \eqref{s1}, $x(t,z,\e)$. From Lemma \ref{popl1}, the Poincar\'e map $\Pi(z,\e)=x(T,z,\e)$ writes
 \[
\Pi(z,\e)=z+\sum_{i=1}^{k}\e^{i}\dfrac{y_i(T,z)}{i!}+\CO(\e^{k+1}).
\]
Consequently, the stability of the $T$-periodic solution $\varphi(t,\e)$ is given by the eigenvalues of the Jacobian matrix
\begin{equation*}
\partial_z\Pi(\varphi(0,\e),\e)=Id+\A(\e)+\CO(\e^{k+1}),
\end{equation*}
where $\A(\e)$ is given by \eqref{Aepsi}.

By hypothesis, $\A(\e)=\e^{\ell} A_{\ell}+\CO(\e^{\ell+1})$, thus the eigenvalues of $\partial_z\Pi(\varphi(0,\e),\e)$ have the form $\omega(\e)=1+\e^\ell \eta(\e)$, where $\eta(\e)$ is an eigenvalue of the matrix
\begin{equation*}\label{al}
\widetilde \A(\e):=\dfrac{\partial_z\Pi(\varphi(0,\e),\e)-Id}{\e^{\ell}}=\dfrac{\A(\e)}{\e^{\ell}}+\CO(\e^{k+1-\ell})=A_\ell+\e  R_{\mu}(\e)+\CO(\e^{\mu+1})
\end{equation*}
and  $R_{\mu}(\e)$ is a matrix function, polynomial in $\e$ of degree $\mu-1,$ with $\mu\leq k-\ell$, satisfying $$A_{\ell}+\e R_{\mu}(\e)=J_{\mu}\big(\e^{-\ell} \A(\e)\big).$$

Notice that, for $\e>0$ sufficiently small, $|\omega(\e)|<1$ (resp. $|\omega(\e)|>1$)  provided that $\Re(\eta(\e))<0$ (resp. $\Re(\eta(\e))>0$). Indeed,
\[|\omega(\e)|^2=|1+\e^\ell \eta(\e)|=1+2\e^\ell \Re\left(\eta(\e)\right)+\e^{2\ell}|\eta(\e)|^2.\]
Thus, in what follows, we study the sign of the real part of the eigenvalues of $\widetilde A(\e)$.

Let $\mathcal{B}(\eta;\e)$ and $\mathcal{B}_{\mu}(\eta;\e)$ be the characteristic polynomials of $\widetilde \A(\e)$ and $J_{\mu}\big(\e^{-\ell}\A(\e)\big)$, respectively. That is,
\begin{equation}
\begin{aligned}\label{eq:chapoly}
\mathcal{B}(\eta;\e)&=\det\left( A_\ell-\eta Id+\e  R_{\mu}(\e)+\CO(\e^{\mu+1})\right)\,\text{ and}\\
\mathcal{B}_{\mu}(\eta;\e)&=\det\left(A_{\ell}-\eta Id+\e R_{\mu}(\e)\right).
\end{aligned}
\end{equation}
Notice that 
$\mathcal{B}(\eta;0)=\mathcal{B}_{\mu}(\eta;0)=\mathcal{B}_0(\eta)$,
where 
$\mathcal{B}_0(\eta)=\det(A_\ell-\eta I_n)$ is the characteristic polynomial of $A_{\ell}$.

Let $n^u$ (resp. $n^s$) be the number of roots of $\mathcal{B}_0(\eta)$, counting multiplicity, with positive (resp. negativity) real part. Consider a simple closed curves $C^u\subset \{z\in\mathbb{C}:\,\Re(z)>0\}$ and $C^s\subset \{z\in\mathbb{C}:\,\Re(z)<0\}$ such that $\inte(C^u)$ contains  all the $n^u$ roots of $\mathcal{B}_0(\eta)$ with positive real part and  $\inte(C^s)$ contains all the $n^s$ roots of $\mathcal{B}_0(\eta)$ with negative real part. Notice that  there exists $\ov \e>0$ such that  $|\mathcal{B}_0(z)|>|\mathcal{B}(z;\e)-\mathcal{B}_0(z)|$ and  $|\mathcal{B}_0(z)|>|\mathcal{B}_{\mu}(z;\e)-\mathcal{B}_0(z)|$ for every $z\in C^u\cup C^s$. Applying Rouche's Theorem we have that, for every $\e\in[0,\ov \e]$, the polynomials $\mathcal{B}_0(\eta)$, $\mathcal{B}(\eta;\e)$, and $\mathcal{B}_{\mu}(\eta;\e)$ have the same number of roots inside $\inte(C^u)$ and $\inte(C^s)$, counting multiplicity. 

Now, denote by $n^c$ the number of roots of $\mathcal{B}_0(\eta)$ on the imaginary axis. Clearly, we have $n^c+n^u+n^s=n$. In what follows we study the roots of $\mathcal{B}(\eta;\e)$ and $\mathcal{B}_{\mu}(\eta;\e)$ close to the $n^c$ roots of $\mathcal{B}_0(\eta)$ on the imaginary axis.

Let $\eta^*$ be a root of $\mathcal{B}_0(\eta)$ on the imaginary axis. Since $\mathcal{B}_0(\eta)$ is the characteristic polynomial of $A_{\ell}$, we have by hypothesis that $\mathcal{B}_0'( \eta^*)\neq0$ and, consequently,
$\p_{\eta}\mathcal{B}(\eta^*;0)\neq0$ and $\p_{\eta}\mathcal{B}_{\mu}(\eta^*;0)\neq0$. Then, from the Implicit Function Theorem, there exists a unique root $\eta(\e)$ of $\mathcal{B}(\eta;\e)$ and a unique root $\la(\e)$ of $\mathcal{B}_{\mu}(\la;\e)$ satisfying $\eta(0)=\la(0)=\eta^*$. In addition, $\eta(\e)$ and $\la(\e)$ are  $\C^k$ in a neighborhood of $\e=0$. Furthermore,  taking \eqref{eq:chapoly}  into account, we see that $J_{\mu}\eta(\e)=J_{\mu}\la(\e)$ and, consequently, for $\e>0$ sufficiently small
\[
\sgn\left(\Re(\eta(\e))\right)=\sgn\left(\Re(J_{\mu}\eta(\e))\right)=\sgn\left(\Re(J_{\mu}\la(\e))\right),
\]
provide that $\Re(J_{\mu}\la(\e))\neq 0$.

Summarizing, we proved that:
\begin{itemize}
\item[-] for the $n^u$ (resp. $n^s$) eigenvalues $\la(\e)$ of $J_{\mu}\big(\e^{-\ell}\A(\e)\big)$, counting multiplicity, such that $\Re(\la(0))>0$ (resp. $\Re(\la(0))<0$), there exist $n^u$ (resp. $n^s$) eigenvalues  $\eta(\e)$ of $\widetilde \A(\e)$, also counting multiplicity, satisfying $\Re(\eta(\e))>0$ (resp. $\Re(\eta(\e))<0$), for $\e>0$ sufficiently small.

\item[-] for the $n^c$ eigenvalues $\la(\e)$ of $J_{\mu}\big(\e^{-\ell}\A(\e)\big)$ such that $\Re(\la(\e))=0$, there exist $n^c$ eigenvalues  $\eta(\e)$ of $\widetilde\A(\e)$ satisfying 
$$
\sgn(\eta(\e))=\sgn(J_{\mu}\la(\e)),
$$
for $\e>0$ sufficiently small.
\end{itemize}
Therefore, the proof of Theorem \ref{thm:intro} follows from the hypotheses.

\section{Proof of Theorem \ref{teob}}\label{sec:teob}
Consider $x(T,z,\e)$ the solution of the differential equation \eqref{s1} written as 
\[
x(T,z,\e)=x(T,z,0)+\sum_{i=1}^{k}\e^{i}\dfrac{y_i(T,z)}{i!}+\CO(\e^{k+1}).
\]
The Poincar\'e map associated with the differential equation \eqref{s1}, $\Pi(z,\e)=x(T,z,\e)$ becomes 
\begin{equation*}
\Pi(z,\e)=x(T,z,0)+h(T,z,\e),
\end{equation*}
where $h(T,z,\e)$ denotes the function $h(T,z,\e)=\sum_{i=1}^{k}\e^{i}\dfrac{y_i(T,z)}{i!}+\CO(\e^{k+1})$. 

From Hypothesis {\bf H3} we have that $\varphi(0,\e)$ is a fixed point of the Poincar\'e map, then the stability of the periodic solution $\varphi(t,\e)$ is provided by the eigenvalues of the Jacobian matrix
\begin{equation}\label{eqc}
\partial_z\Pi(\varphi(0,\e),\e)=Y_\e(\varphi(0,\e))+\partial_zh(T,\varphi(0,\e),\e).
\end{equation}
In order to study the eigenvalues of $\partial_z\Pi(\varphi(0,\e),\e)$ we consider the similar matrix
\begin{equation}\label{mp0}
L\,\partial_z \Pi(\varphi(0,\e),\e)\,L^{-1}=L\, Y_0(T)\,L^{-1}+L (Y_\e(T)-Y_0(T)+\partial_zh(T,\varphi(0,\e),\e))L^{-1},
\end{equation}
where $L$ is given in \eqref{ML}. Accordingly, from \eqref{LYIL} we obtain 
\begin{align*}
L\,\partial_z \Pi(\varphi(0,\e),\e)\,L^{-1}=\left(\begin{array}{cc}
I_m & 0\\
0& I_{n-m}+\Delta(\al^*)-\p\beta (\al^*)\Gamma(\al^*)\end{array}\right)+\left(\begin{array}{cc}A(\e)&B(\e)\\C(\e)&D(\e)\end{array}\right),
\end{align*}
with  $A(\e)$, $B(\e)$, $C(\e)$ and $D(\e)$  defined  in \eqref{mp1}. In order to study the eigenvalues of the above matrix we use Schur's determinant formulae \cite[Pg. 46]{schur} for matrices of the form
$$
\mathcal{M}=\left(\begin{array}{cc}
\mathcal{M}_1 & \mathcal{M}_2\\
\mathcal{M}_3& \mathcal{M}_4\end{array}\right),
$$
where the sub-matrices $\mathcal{M}_1$,  $\mathcal{M}_2$, $\mathcal{M}_3$ and $\mathcal{M}_4$ has the following sizes ${m\times m}$, ${m\times m-n}$, ${m-n\times m}$ and ${n-m \times n-m}$, respectively. If either $\mathcal{M}_1$ or $\mathcal{M}_4$ is invertible, the determinant of $\mathcal{M}$ is  given by 
\begin{equation}\label{schur}
\begin{split}
\det(\mathcal{M})=&\det(\mathcal{M}_1)\det(\mathcal{M}_4-\mathcal{M}_3\mathcal{M}_1^{-1}\mathcal{M}_2),\,\, \mbox{or}\\
\det(\mathcal{M})=&\det(\mathcal{M}_4)\det(\mathcal{M}_1-\mathcal{M}_2(\mathcal{M}_4)^{-1}\mathcal{M}_3),
\end{split}
\end{equation}
respectively. 

Since the matrices $\partial_z\Pi(\varphi(0,\e),\e)$ and $L\,\partial_z \Pi(\varphi(0,\e),\e)\,L^{-1}$ share the same eigenvalues, we shall use the Schur's formulae \eqref{schur} to study the roots of the characteristic polynomial $\det[L\,\partial_z \Pi(\varphi(0,\e),\e)\,L^{-1}-\omega I_n]=$
\begin{equation}\label{detsim}
\begin{aligned}
\det\left[
\left(\begin{array}{cc}
(1-\omega) I_m +A(\e)& B(\e)\\
C(\e)& (1-\omega)I_{n-m}+\Delta(\al^*)-\p\beta (\al^*)\Gamma(\al^*)+D(\e)\end{array}\right)\right].
\end{aligned}
\end{equation}
From \eqref{detsim}, we see that there are two group of eigenvalues for \eqref{mp0}, namely, the ones related with the block 
$$
\mathcal{M}_1(\e)=(1-\omega) I_m +A(\e)
$$
and the ones related with the block 
$$
\mathcal{M}_4(\e)=(1-\omega) I_{n-m}+\Delta(\al^*)-\p\beta (\al^*)\Gamma(\al^*)+D(\e).
$$
The eigenvalues related to the block $\mathcal{M}_1(\e)$ can be studied as follows:

Taking $K_1$ a compact region in the complex plane which does not contain $1$, we have that $\det(I_m-\omega I_m )\neq 0$ for $\omega\in K_1$ and, consequently, the submatrix $\mathcal{M}_1(\e)$ is invertible for every $\omega\in K_1$ and $\epsilon>0$ sufficiently small. Hence, from \eqref{schur}, we have that
\begin{equation}\label{pol1}
\det(L\,\partial_z \Pi(\varphi(0,\e),\e)\,L^{-1}-\omega I_n)=\det\left((1-\omega) I_m +A(\e)\right)\CP(\omega;\e)
\end{equation}
where $\CP(\omega;\e)$ is given by \eqref{cor}. We are going to show that the zeroes of $\CP(\omega;\e)$ provide $n-m$ eigenvalues of \eqref{mp0}.

The remaining $m$ eigenvalues, related to the block $\mathcal{M}_4(\e)$, can be studied as follows. Taking $\omega=1+\e^\ell \lambda$, one has 
\[
\mathcal{M}_4(\e)=\Delta(\al^*)-\p\beta (\al^*)\Gamma(\al^*)-\e^{\ell}\la I_{n-m}+D(\e).
\]
Thus, by hypothesis {\bf H4} we have that $\det(\mathcal{M}_4(0))\neq0$. Consequently, there is a compact region in the complex plane $K_2$, such that $\mathcal{M}_4(\e)$ is invertible for every $\la\in K_2$ and $\e>0$ sufficiently small. Hence, for $\la\in\mathbb{C}$, we have
\begin{equation}\label{pol2}
\det\left(L\,\partial_z \Pi(\varphi(0,\e),\e)\,L^{-1}-(1+\e^\ell \lambda) I_n\right)=\det\left(N(\e)-(1+\e^\ell\lambda)I_{n-m}\right)\CQ(\la;\e),
\end{equation}
where $\CQ(\la;\e)$ is given by \eqref{est}.
Namely, since $M(\e) $ defined in \eqref{NM} satisfies \eqref{Mep}, we have from \eqref{schur} that
\begin{equation*}
\begin{split}
\CQ(\la;\e)&=\det\Big[-\e^\ell \lambda I_{m}+A(\e)-B(\e)\mathcal{M}_4(\e)^{-1}C(\e)\Big]\\
&=\e^{m\ell}\det\left(M_\ell-\lambda I_{m} \right)+\e^{m\ell+1}R(\lambda,\e).
\end{split}
\end{equation*}
The roots of $\CQ(\la;\e)$ will provide the other $m$ eigenvalues of \eqref{mp0}. Therefore, w denote the characteristic polynomial of \eqref{eqc} as
$$
\mathcal{C}(\omega,\e)=\det\left(L\,\partial_z P(\varphi(0,\e),\e)\,L^{-1}-\omega I_n\right).
$$
Taking \eqref{detsim} into account, we have that
$$
\mathcal{C}(\omega,\e)=\mathcal{C}_0(\omega)+\e \mathcal{R}(\omega,\e),
$$
where 
$$
\mathcal{C}_0(\omega)=\det\left(\begin{array}{cc}
 (1-\omega) I_m& 0\\
0& (1-\omega)I_{n-m}+\Delta(\al^*)-\p\beta (\al^*)\Gamma(\al^*)\end{array}\right).
$$
Let $n^s$, $n^u$, and $n^c$ be the number of roots of $\mathcal{C}_0(\omega)$ with norm smaller than one, greater than one, and equal to one, respectively. Following the same approach as in the previous proof, we consider two simple curves $C^u\subset \{z\in\mathbb{C}:\,|z|>1\}$ and $C^s\subset \{z\in\mathbb{C}:\,|z|<1\}$ such that $\inte(C^u)$ contains  all the $n^u$ roots of $\mathcal{C}_0(\omega)$ that are outside the unitary disc, and   $\inte(C^s)$ contains all the $n^s$ roots of $\mathcal{C}_0(\omega)$ within the unitary disc, see Figure \ref{fig1}. There exists $\ov \e>0$ such that  $|\mathcal{C}_0(z)|>|\mathcal{C}(z,\e)-\mathcal{C}_0(z)|$  for every $z\in C^u\cup C^s$. Applying Rouche's Theorem we have that, for every $\e\in[0,\ov \e]$,  polynomials $\mathcal{C}_0(\omega)$ and $\mathcal{C}(\omega,\e)$ have the same number of roots inside $\inte(C^u)$ and $\inte(C^s)$, counting multiplicity. 

In order to study the branches of roots of  $\mathcal{C}(\omega,\e)$ that emerge from the $n^c$ roots of $\mathcal{C}_0(\omega)$ that are on the unitary disc (couting multiplicity), we divide them in two sets. First,  we consider  a closed ball $C_a^c $ around $1$, which is a root with multiplicity $m$ of polynomial $\mathcal{C}_0(\omega)$. Then, we define a region bounded by a simple curve $C^c_b$ that contains $n^c-m$ roots of $\mathcal{C}_0(\omega)$ on the unitary disc satisfying $C_b^c \cap C_a^c=\emptyset$, $C_b^c \cap C^u=\emptyset$, and $C_b^c \cap C^s=\emptyset$.  Figure \ref{fig1}, provides the qualitative configuration of the regions $C^u$, $C^s$, $C_a^c$, and $C_b^c$.
\begin{figure}[H]
	\begin{overpic}[width=9.5cm]{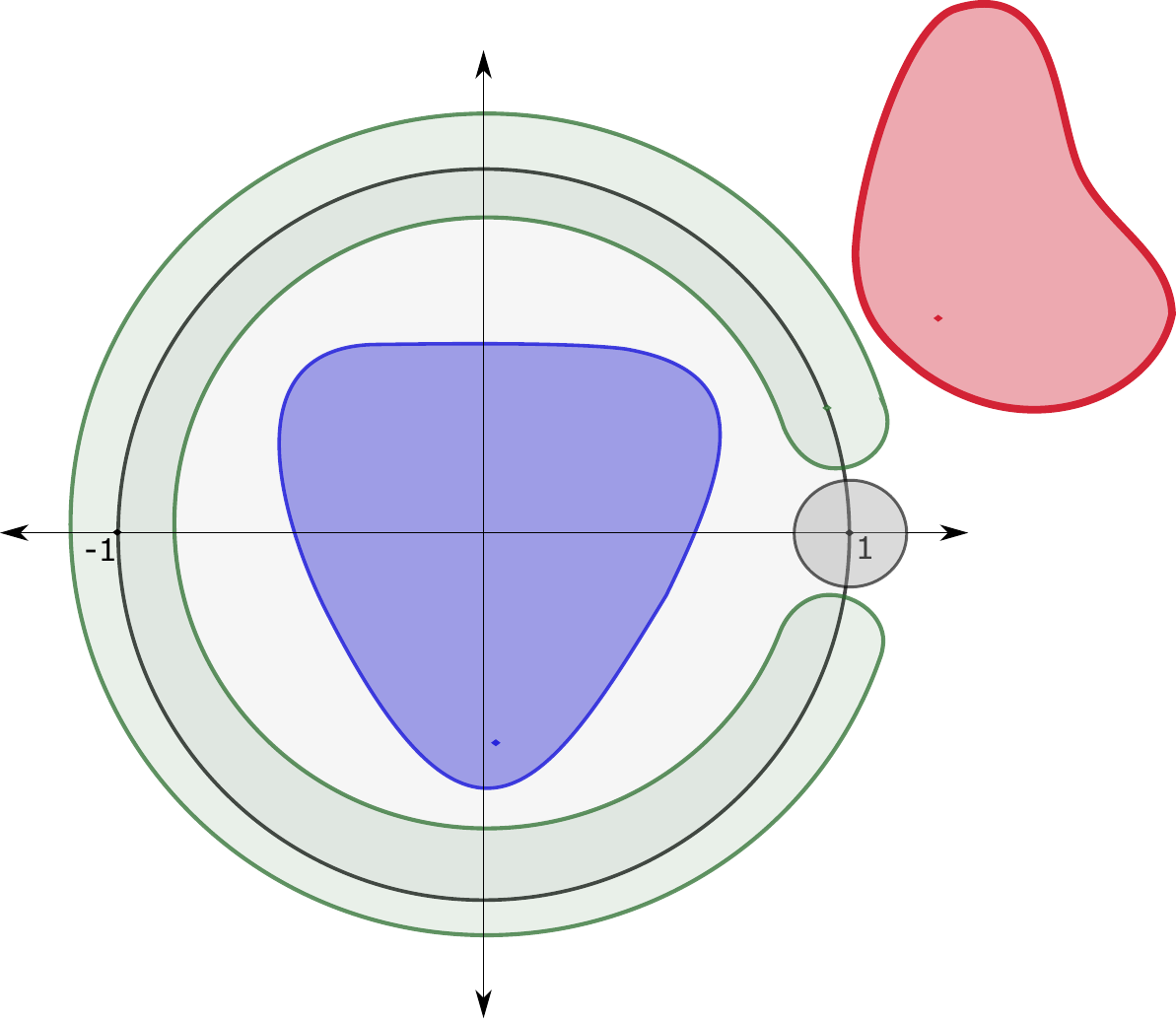}
\put(84,65){$C^u$}	
\put(30,45){$C^s$}
\put(9,65){$C_b^c$}	
\put(77,44){$C_{a}^c$}	
\put(37,38){$O$}	
\put(42,82){$Im$}
\put(80,35){$Re$}
	\end{overpic}
	\caption{Configuration of the bounded regions containing the roots of $\mathcal{C}(\omega,\e)$.}
	\label{fig1}	
\end{figure}

Henceforth, we shall establish the connection between the roots inside $C_a^c$ and $C_b^c$ with the zeroes of the functions $\CP(\omega;\e)$ and $\CQ(\la;\e),$ respectively.  First, we restrict our domain to $K_2=\ov{\textrm{Int}C_a^c}$, for $\e>0$ sufficiently small the sub-matrix $\mathcal{M}_4(\e)= (1-\omega)I_{n-m}+\Delta(\al^*)-\p\beta (\al^*)\Gamma(\al^*)+D(\e)$ is invertible. From \eqref{pol2} we have the following relationship
\[
\mathcal{C}(1+\e^\ell \lambda,\e)=0 \iff \mathcal{Q}(\lambda,\e)=0.
\]
In order to compare the roots of $\mathcal{Q}(\lambda,\e)$ with the roots of its  $\mu_2+m\ell-$jet we write
\begin{align*}
\mathcal{Q}_{\mu_2}(\lambda,\e)&=J_{\mu_2+m\, \ell}^{\e}\CQ(\lambda,\e)\\
&=\e^{m\ell}\det\big(M_{\ell}-\lambda I_{m}\big)+\e^{m\ell+1}\mathcal{R}_{\mu_2}(\lambda,\e).
\end{align*}
This is analogous to what was done in the proof of Theorem \ref{teoa}. In fact, for $\e>0$ sufficiently small, $|\omega(\e)|=|1+\e^\ell \lambda(\e)|<1$ (resp. $|\omega(\e)|=|1+\e^\ell \lambda(\e)|>1$)
 provide that $\Re(\lambda(0))<0$ (resp. $\Re(\lambda(0))>0$). Thus, it follows from Rouche's Theorem  that $\mathcal{Q}(\lambda,\e)\e^{-m\ell}$ and $\mathcal{Q}_{\mu_2}(\lambda,\e)\e^{-m\ell}$ have the same number of zeroes counting multiplicity. Moreover, inside the region $C_a^c$ we can define smaller regions bounded by simple curves $D^u$, $D^s$ and $D^c$ that  contain, for $\e\in[0,\ov\e]$, the roots $1+\e^\ell \lambda(\e)$ satisfying $\Re(\lambda(0))>0$, $\Re(\lambda(0))<0$ and $\Re(\lambda(0))=0$ respectively, see Figure \ref{fig2}.
\begin{figure}[H]
	\begin{overpic}[width=7cm]{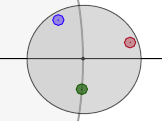}
\put(81,41){$D^u$}	
\put(40,58){$D^s$}	
\put(54,15){$D^c$}
\put(52.5,33.5){$1$}
\put(80,10){$C_{a}^c$}
	\end{overpic}
	\caption{The smaller regions $D^s$,$D^u$, and $D^c$ inside $C_a^c$.}
	\label{fig2}
\end{figure}
Thus, concerning the roots that belongs to $D^c$ we observe that 
\begin{align*}
\widehat{\mathcal{Q}}(\lambda,\e)=\mathcal{Q}(\lambda,\e)\e^{-m\ell}&=\det\big(M_{\ell}-\lambda I_{m}\big)+\e\mathcal{R}(\lambda,\e)\\
\widehat{\mathcal{Q}}_{\mu_2},(\lambda,\e)=\mathcal{Q}_{\mu_2}(\lambda,\e)\e^{-m\ell}&=\det\big(M_{\ell}-\lambda I_{m}\big)+\e\mathcal{R}_{\mu_2}(\lambda,\e)  ,
\end{align*}
then we have $\widehat{\mathcal{Q}}(\lambda,0)=\widehat{\mathcal{Q}}_{\mu_2}(\lambda,0)=\det\big(M_{\ell}-\lambda I_{m}\big)$. Taking $\lambda(0)=\lambda^*$ satisfying $\Re(\lambda^*)=0$ and $\det\big(M_{\ell}-\lambda^* I_{m}\big)=0$ we have by hypothesis that $\lambda^*$ is a simple root of the characteristic polynomial of $M_\ell$. Consequently, we have $\partial_\lambda\widehat{\mathcal{Q}}(\lambda^*,0)\neq 0$ and  $\partial_\lambda\widehat{\mathcal{Q}}_{\mu_2}(\lambda^*,0)\neq 0$. Then, by the Implicit Function Theorem, there exists a unique root $\lambda(\e)$ of $\widehat{\mathcal{Q}}(\lambda,\e)$ and a unique  root $\lambda_{\mu_2}(\e)$ of $\widehat{\mathcal{Q}}_{\mu_2}(\lambda,\e)$, satisfying $\lambda(0)=\lambda_{\mu_2}(0)=\lambda^*$. The branch of zeros $\lambda(\e)$ and $\lambda_{\mu_2}(\e)$ are $\mathcal{C}^k$ in a neighbourhood of $\e=0$. Moreover,  $J_{\mu_2}\lambda(\e)=J_{\mu_2}\lambda_{\mu_2}(\e)$ and, consequently,
\[
\sgn\left(\Re(\lambda(\e))\right)=\sgn\left(\Re(J_{\mu_2}\lambda(\e))\right)=\sgn\left(\Re(J_{\mu_2}\lambda_{\mu_2}(\e))\right),
\]
for $\e>0$ sufficiently small, provided that $\Re(J_{\mu_2}\lambda_{\mu_2}(\e))\neq 0$. Thus, it follows that 
\begin{align*}
|\omega(\e)|^2&=1+2\e^\ell \Re\left(\lambda(\e)\right)+\e^{2\ell}|\lambda(\e)|^2
\end{align*}
then we have 
\begin{align*}
\sgn\left(|\omega(\e)|^2-1\right)&=\sgn\left(2\e^\ell \Re\left(\lambda(\e)\right)+\e^{2\ell}|\lambda(\e)|^2\right)\\
&=\sgn\left(\dfrac{2\e^\ell \Re(J_{\mu_2}\lambda_{\mu_2}(\e))+\e^{2\ell}|\lambda(\e)|^2}{2\e^\ell \Re(J_{\mu_2}\lambda_{\mu_2}(\e)}\left(2\e^\ell \Re(J_{\mu_2}\lambda_{\mu_2}(\e)\right)\right)\\
&=\sgn\left(1+\CO(\e)\right)\sgn\left(\Re(J_{\mu_2}\lambda_{\mu_2}(\e))\right)\\
&=\sgn\left(\Re(J_{\mu_2}\lambda_{\mu_2}(\e))\right).
\end{align*}
Finally, we restrict our domain to the region $K_1=\ov{\textrm{Int}\,C_b^c}$. Since $1\not\in K_1$,  we have for $\e$ sufficiently small, the sub-matrix $\mathcal{M}_1(\e)=I_m-\omega I_m +A(\e)$ is invertible.

On the other hand, from \eqref{pol1} we have the following relationship
\begin{equation*}
\mathcal{C}(\omega,\e)=0 \iff \mathcal{P}(\omega;\e)=0.
\end{equation*}
Therefore, the only zeros $\omega(\e)$ of 
\begin{equation*}
\begin{split}
\CP(\omega;\e)&=\det\Big[N(\e)-\omega\, I_{n-m}-C(\e)\big((1-\omega)I_m+\e\, A(\e)\big)^{-1} B(\e)\Big]\\
&=\det\left( (1-\omega)I_{n-m}+\Delta(\al^*)-\p\beta (\al^*)\Gamma(\al^*)\right)+\e \mathcal{R}(\omega,\e),
\end{split}
\end{equation*} 
within $C_b^c$ are those satisfying $|\omega(0)|= 1$. Thus, taking $\omega(0)=\omega^*$ satisfying $|\omega^*|=1$, we have by hypothesis that $\omega^*$ is a simple root of the characteristic polynomial $\det( (1-\omega)I_{n-m}+\Delta(\al^*)-\p\beta (\al^*)\Gamma(\al^*))$. Consequently, $\partial_\omega \CP(\omega^*;0)\neq 0$ and it follows from the Implicit Function Theorem that there exists  a unique root $\omega(\e)$ of $\CP(\omega;\e)$ for $\e$ sufficiently small. Following the same argument as before, we can check that if $\omega_{\mu_1}(\e)$ is the unique zero of the  jet $J_{\mu_1}^{\e} \CP(\omega;\e)$ satisfying $\omega_{\mu_1}(0)=\omega^*$, then $J_{\mu_1}\omega(\e)=J_{\mu_1}\omega_{\mu_1}(\e)$ for  $\e>0$ sufficiently small. The norm of $\omega(\e)$ can be determined by the following computation
\begin{align*}
|\omega(\e)|^2&=|J_{\mu_1}\omega(\e)|^2+\e^{\mu_1+1}\left(\ov{\omega(\e)}\,J_{\mu_1} \omega(\e)-2\,|J_{\mu_1} \omega(\e)|^2+\omega(\e)\,\ov{J_{\mu_1} \omega(\e)}\right)\\
&+\e^{2(\mu_1+1)}\left(|J_{\mu_1} \omega(\e)|^2-\ov{\omega(\e)}\,J_{\mu_1} \omega(\e)-{\omega(\e)}\,\ov{J_{\mu_1} \omega(\e)}+|\omega(\e)|^2 \right).
\end{align*}
Thus, provided that $|J_{\mu_1}\omega_{\mu_1}(\e)|^2-1 \neq 0$ and $\e>0$ sufficiently small, we have
\begin{align*}
\sgn\left(|\omega(\e)|^2-1\right)&=\sgn\left(|J_{\mu_1}\omega_{\mu_1}(\e)|^2-1+\CO(\e^{\mu_1+1})\right)\\
&=\sgn\left(\dfrac{|J_{\mu_1}\omega_{\mu_1}(\e)|^2-1+\CO(\e^{\mu_1+1})}{|J_{\mu_1}\omega_{\mu_1}(\e)|^2-1}\left(|J_{\mu_1}\omega_{\mu_1}(\e)|^2-1\right)\right)\\
&=\sgn\left(1+\CO(\e)\right)\,\sgn\left(|J_{\mu_1}\omega_{\mu_1}(\e)|^2-1\right)\\
&=\sgn\left(|J_{\mu_1}\omega_{\mu_1}(\e)|^2-1\right).
\end{align*}
Organizing the  results obtained so far, we can summarize our proof in the following way:
\begin{itemize}
\item[-] Let $\omega(\e)$ be one of the $n^u$ (resp. $n^s$) zeroes of $\CP(\omega;\e)$ satisfying $|\omega(0)|>1$ (resp. $|\omega(0)|<1$). Then, $\omega(\e)$ is on of the roots of the characteristic polynomial
$$
\mathcal{C}(\omega,\e)=\det\left(\mathcal{M}_1\right)\CP(\omega;\e),
$$
for $\e>0$ sufficiently small.

\item[-] If  $\omega(\e)$  is  one of the $n^c-m$ zeroes of $\CP(\omega;\e)$ satisfying $|\omega(0)|=1$, then there exists $\widetilde\omega(\e)$ a zero of  $J^\e_{\mu_1}\CP(\omega;\e)$, with the same multiplicity of $\omega(\e)$ satisfying  $\widetilde\omega_{\mu_1}(\e)=\omega_{\mu_1}(\e)$ and
$$
\sgn\left(|\omega(\e)|^2-1\right)=\sgn\left(|J_{\mu_1}\widetilde\omega_{\mu_1}(\e)|^2-1\right),
$$
for $\e>0$ sufficiently small.

\item[-] If $\lambda(\e)$ is one of the $m$ zeroes of $\mathcal{Q}(\lambda,\e)$, counting multiplicity, then there exists a $\omega(\e)=1+\e^\ell \lambda(\e)$ a root of the characteristic polynomial 
$$
\mathcal{C}(\omega,\e)=\det\left(\mathcal{M}_4\right)\mathcal{Q}(\lambda;\e),
$$
where $\omega=1+\e^l \lambda$. The root $\omega(\e)$ satisfies the following relationships. It has the same multiplicity of  $\lambda(\e)$. Moreover, there exists a zero $\widetilde\lambda(\e)$ of $J^\e_{\mu_2+m\ell}\mathcal{Q}(\lambda,\e)$ such that $\widetilde\lambda_{\mu_2}(\e)=\lambda_{\mu_2}(\e)$ and 
$$
\sgn\left(|\omega(\e)|^2-1\right)=\sgn\left(\Re(J_{\mu_2}\widetilde\lambda_{\mu_2}(\e))\right),
$$ 
for $\e>0$ sufficient small.
\end{itemize}
This concludes the proof.

\section{General formulae for asymptotic expansions}\label{Generic}

The Lyapunov-Schmidt reduction is one of the methods used for detecting isolated periodic solutions bifurcating from a submanifold of $T-$periodic solutions. This type of periodic solution satisfies hypotheses {\bf H1}-{\bf H3}. 
In \cite{CLN17}, the authors combined the averaging method and the Lyapunov-Schmidt reduction for studying this kind of bifurcation problem for differential equations written in the standard form \eqref{s1}. As usual, in the averaging theory the problem is reduced to the analysis of certain bifurcation functions. Namely, from  \eqref{smoothyi} we define the \textit{averaged function of order $i$} by taking
\begin{equation}\label{eqgi}
\bg_i(z)=\dfrac{y_i(T,z)}{i!}, \quad i=1,\dots,k.
\end{equation}
For the sake of simplicity, we denote $y_0(T,z)=x(T,z,0)$ and
$$
\bg_0(z)=y_0(T,z)-z=x(T,z,0)-z.
$$ 
Using the Poincar\'e map \eqref{pm},  we can define the displacement function
\begin{equation}\label{fdm}
\begin{split}
d(z,\e)&=\Pi(z,\e)-z\\
&=\bg_0(z)+\sum_{i=1}^{k}\e^{i} \bg_i(z)+\CO(\e^{k+1}).
\end{split}
\end{equation}

Clearly, if $\varphi(t,\e)$ is a solution of the differential equation \eqref{s1} satisfying $d(\varphi(0,\e),\e)=0,$ then $\varphi(0,\e)$ is a fixed point of the Poincar\'e map $\Pi(z,\e)$. For the sake of completeness, in what follows we provide some results from \cite{CLN17}. In order to state \cite[Theorem A]{CLN17} we need to determine the bifurcation functions related with the displacement function.

Let  $\pi:\R^m\times\R^{n-m} \rightarrow\R^m$ and
$\pi^{\perp}:\R^m\times \R^{n-m} \rightarrow\R^{n-m}$  denote the
projections onto the first $m$ coordinates and onto the last $n-m$
coordinates, respectively. For a point $z\in\Omega,$ we also consider
$z=(a,b)\in \R^m\times\R^{n-m}$. The
bifurcation functions corresponding to the equation $d(z,\e)=0$ are
\begin{equation}\label{fi} f_i(\al)=\pi
\bg_{i}(z_{\al})+\sum_{l=1}^i\sum_{S_l}\dfrac{1}{c_1!\,c_2!2!^{c_2}\cdots
c_l!l!^{c_l}}\p_b^L\pi
\bg_{i-l}(z_{\al})\bigodot_{j=1}^l\ga_j(\al)^{c_j},
\end{equation}
\begin{equation*}
\CF^{k}(\al,\e)=\sum_{i=1}^{k} \e^i f_i(\al),
\end{equation*}
where $\ga_i:V\rightarrow \R^{n-m}$, for $i=1,2,\ldots,k$, are
defined recurrently as
\begin{align*}
\ga_1(\al)=\!&-\Delta_{\al}^{-1}\pi^{\perp}\bg_{1}(z_\al)
\quad\text{and}\vspace{0.3cm}\nonumber\\
\ga_i(\al)=\!&-i!\Delta_{\al}^{-1}\Bigg(
\sum_{S'_i}\dfrac{1}{c_1!\,c_2!2!^{c_2}\cdots
c_{i-1}!(i-1)!^{c_{i-1}}}\p_b^{I'}\pi^{\perp}
\bg_{s}(z_{\al})\bigodot_{j=1}^{i-1}
\ga_j(\al)^{c_j}\vspace{0.2cm}\\
&+ \sum_{l=1}^{i-1}\sum_{S_l}\dfrac{1}{c_1!\,c_2!2!^{c_2}\cdots
c_l!l!^{c_l}}\p_b^L\pi^{\perp}
\bg_{i-l}(z_{\al})\bigodot_{j=1}^l\ga_j(\al)^{c_j}\Bigg),\nonumber
\end{align*}
with  $\Delta_\al=\p\pi^{\perp} \bg_0(z_{\al})$.

\begin{theorem}\label{PStf}
Assume that system \eqref{s1} satisfies  hypothesis  {\bf H1} and {\bf H2}.
Consider the Jacobian matrix
$$
\p \bg_0({\bf z}_\alpha)=\begin{pmatrix}
* & \Gamma_\alpha \\
* & \Delta_\alpha
\end{pmatrix},
$$
where $\Gamma_\alpha=\p_b\pi
\bg_0(z_\alpha)$ and
$\Delta_\alpha=\p_b\pi^\perp \bg_0(z_\alpha)$. In addition to these hypothesis  we suppose that
\begin{itemize}

\item[$(i)$] for some $r\in\{0,\ldots,k\}$,  $f_1 = f_2 =\dots = f_{r-1}
= 0$ and $f_r$ is not identically zero;

\item[$(ii)$] there exists $\ov \e>0$  such that for each
$\e\in(-\ov\e,\ov\e)$ there exists $a_{\e}\in V$ satisfying
$\CF^{k}(a_{\e},\e )=0$; and

\item[$(iii)$] there exist a constant $P_0>0$ and a positive integer $l\leq
( k+r+1)/2$ such that
\begin{equation*}
\left|\p_\al\CF^{k}(a_\e,\e)\cdot \al\right|\geq P_0|\e^l||\al|,
\quad \text{for} \quad \al\in V.
\end{equation*}
\end{itemize}
Then, for $|\e|\neq0$ sufficiently small there exists a
$T$-periodic solution $\varphi(t,\e)$ of the differential equation \eqref{s1} such that
$|\pi\,\varphi(0,\e)-\pi\,z_{a_{\e}}|=\CO(\e^{ {k}+1-l})$, and
$|\pi^{\perp}\varphi(0,\e)-\pi^{\perp}z_{a_{\e}}|=\CO(\e)$.
\end{theorem}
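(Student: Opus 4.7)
The plan is to apply a Lyapunov--Schmidt reduction to the displacement function \eqref{fdm} in two stages, and then to localize the zeroes of the reduced equation by a Brouwer-type argument that exploits the degenerate coercivity in (iii). Splitting coordinates as $z=(a,b)\in\R^m\times\R^{n-m}$, the auxiliary equation $\pi^{\perp}d(a,b,\e)=0$ is solved first for $b$ as a function of $(a,\e)$, and then the residual $m$-dimensional bifurcation equation $\pi d(a,b(a,\e),\e)=0$ is analysed on $V$.

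By {\bf H1} we have $\pi^{\perp}\bg_0(z_\al)=0$ for $\al\in\ov V$, and by {\bf H2} the block $\Delta_\al=\p_b\pi^{\perp}\bg_0(z_\al)$ is invertible. The classical implicit function theorem then provides a unique $\CC^{k+1}$ solution $b(\al,\e)$ of $\pi^{\perp}d(\al,b(\al,\e),\e)=0$ with $b(\al,0)=\beta(\al)$. Matching coefficients of $\e^i$ in that identity and expanding via Fa\`a di Bruno's formula recovers exactly the recursive formulae for the $\ga_i(\al)$ stated just before the theorem, giving
\[
b(\al,\e)=\beta(\al)+\sum_{i=1}^{k}\e^{i}\dfrac{\ga_i(\al)}{i!}+\CO(\e^{k+1})
\]
uniformly on $\ov V$. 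Substituting back into $\pi d$, Taylor expanding each $\pi\bg_i$ in $b$ around $\beta(\al)$, and using once more $\pi\bg_0(z_\al)=0$, the coefficient of $\e^i$ in $\wt d(\al,\e):=\pi d(\al,b(\al,\e),\e)$ is identified with $f_i(\al)$ from \eqref{fi}. This yields the key identity
\[
\wt d(\al,\e)=\CF^{k}(\al,\e)+\CO(\e^{k+1}),
\]
valid uniformly on $\ov V$. Any zero $\al_\e$ of $\wt d(\cdot,\e)$ with $\e\neq 0$ supplies an initial condition $(\al_\e,b(\al_\e,\e))$ whose forward orbit under \eqref{s1} is $T$-periodic, and the $\pi^{\perp}$-estimate of the conclusion follows directly from $b(\al,\e)-\beta(\al)=\CO(\e)$.

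To locate such a zero near $a_\e$ (the point supplied by (ii), at which $\CF^{k}$ vanishes), I would set $\al=a_\e+\eta$ and Taylor expand
\[
\wt d(a_\e+\eta,\e)=\p_\al\CF^{k}(a_\e,\e)\,\eta+\CO(|\eta|^{2})+\CO(\e^{k+1}).
\]
Condition (iii) gives $|\p_\al\CF^{k}(a_\e,\e)\cdot\eta|\geq P_0|\e|^{l}|\eta|$, so this linear operator is injective, hence invertible, with inverse of norm at most $1/(P_0|\e|^l)$. Recasting $\wt d(a_\e+\eta,\e)=0$ as a fixed-point equation $\eta=\Psi_\e(\eta)$ on the closed ball $B_\e=\{|\eta|\leq C|\e|^{k+1-l}\}$ for a suitably large $C$, Brouwer's theorem produces a zero $\eta(\e)$ with $|\eta(\e)|=\CO(\e^{k+1-l})$, which gives the $\pi$-estimate in the conclusion.

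The main technical obstacle is this last step: the lower bound $|\e|^{l}$ in (iii) degenerates as $\e\to 0$, so the standard implicit function theorem does not apply to $\wt d$ at $(a_0,0)$, and the ball radius must be scaled carefully. Invariance $\Psi_\e(B_\e)\subset B_\e$ requires the quadratic tail, divided by the coercivity scale $|\e|^l$, to remain $o(\e^{k+1-l})$ on $B_\e$; since $f_1=\cdots=f_{r-1}=0$ forces $\p^{2}_\al\CF^{k}$ to carry an extra factor $\e^{r}$, this amounts to $\e^{r+2(k+1-l)-l}=\CO(\e^{k+1-l})$, i.e.\ precisely to the arithmetic bound $l\leq (k+r+1)/2$ built into (iii). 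That is exactly why this bound appears in the hypothesis and why $|\e|^{k+1-l}$ is the natural choice for the radius of $B_\e$.
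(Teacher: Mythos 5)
Your two-stage Lyapunov--Schmidt reduction---solving $\pi^{\perp}d=0$ for $b(\al,\e)$ via the implicit function theorem using the invertibility of $\Delta_\al$ from {\bf H2}, identifying the reduced equation with $\CF^{k}(\al,\e)+\CO(\e^{k+1})$, and then localizing a zero near $a_\e$ by a Brouwer fixed-point argument on a ball of radius $\CO(\e^{k+1-l})$---is exactly the route the paper sketches in Remark \ref{rmk} and defers in detail to \cite{CLN17}. Your accounting of where $l\leq(k+r+1)/2$ enters (absorbing the quadratic tail, which carries the factor $\e^{r}$ from hypothesis $(i)$, against the $\e^{l}$-degenerate coercivity of $(iii)$) correctly supplies the zero-localization step that the paper itself omits.
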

If the additional hypothesis that $f_r$ has a simple root is assumed, Theorem \ref{PStf} can be written in a simpler form that resemble the classical Theorem \ref{thm:intro}.
\begin{corollary}\label{1popct1a}
In addition to hypotheses {\bf H1} and {\bf H2} we assume that  $r\in\{1,\ldots,k\}$ is the first sub-index such that $f_r$ is  not identically zero. If there exists $\al^* \in V$ such that
$f_r(\al^*)=0$ and  $\det\left(\partial f_r (\al^*) \right)\neq
0$, then there exists a $T$-periodic solution $\varphi(t,\e)$ of
\eqref{s1} such that  $|\varphi(0,\e)-z_{\al^*}|=\CO(\e)$.
\end{corollary}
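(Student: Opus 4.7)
The plan is to derive Corollary \ref{1popct1a} as a direct consequence of Theorem \ref{PStf}, by verifying that the three structural hypotheses $(i)$--$(iii)$ of that theorem are automatic under the simpler assumption of a simple zero of $f_r$.

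First, I would observe that hypothesis $(i)$ of Theorem \ref{PStf} is exactly what is given in the statement: $r$ is the first index for which $f_r \not\equiv 0$, so $f_1 = \cdots = f_{r-1} = 0$.

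Next, I would establish hypothesis $(ii)$. Writing $\CF^{k}(\al,\e) = \e^{r} f_{r}(\al) + \e^{r+1} f_{r+1}(\al) + \cdots + \e^{k} f_{k}(\al)$, I divide out the leading factor and consider the auxiliary function
\[
G(\al,\e) = f_{r}(\al) + \e\, f_{r+1}(\al) + \cdots + \e^{k-r} f_{k}(\al),
\]
for which $G(\al^{*},0) = f_{r}(\al^{*}) = 0$ and $\partial_{\al} G(\al^{*},0) = \partial f_{r}(\al^{*})$ is non-singular. The Implicit Function Theorem then produces a smooth branch $a_{\e} = \al^{*} + \CO(\e)$, defined for $\e$ in some interval $(-\ov\e,\ov\e)$, satisfying $G(a_{\e},\e) = 0$ and hence $\CF^{k}(a_{\e},\e) = 0$. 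Since $\al^{*} \in V$ and $V$ is open, we may shrink $\ov\e$ so that $a_{\e} \in V$.

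For hypothesis $(iii)$, I would differentiate $\CF^k$ directly:
\[
\p_{\al} \CF^{k}(a_{\e},\e) = \e^{r} \p f_{r}(a_{\e}) + \CO(\e^{r+1}) = \e^{r} \p f_{r}(\al^{*}) + \CO(\e^{r+1}).
\]
Because $\p f_{r}(\al^{*})$ is invertible, its smallest singular value is some $\sigma>0$, and for $\e>0$ sufficiently small the perturbed operator $\p_{\al} \CF^{k}(a_{\e},\e)/\e^{r}$ still has smallest singular value at least $\sigma/2$. Consequently
\[
\left| \p_{\al} \CF^{k}(a_{\e},\e)\cdot \al\right| \geq \tfrac{\sigma}{2}\, \e^{r}\, |\al|
\]
for all $\al \in V$, which is hypothesis $(iii)$ with $P_{0} = \sigma/2$ and $l = r$. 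The requirement $l \leq (k+r+1)/2$ reduces to $r \leq k+1$, which holds since $r \leq k$.

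With all three hypotheses of Theorem \ref{PStf} verified, the conclusion yields a $T$-periodic solution $\f(t,\e)$ with
\[
|\pi\,\f(0,\e) - \pi\,z_{a_{\e}}| = \CO(\e^{k+1-r}), \qquad |\pi^{\perp}\f(0,\e) - \pi^{\perp}z_{a_{\e}}| = \CO(\e).
\]
Finally, combining this with $a_{\e} = \al^{*} + \CO(\e)$ (so that $z_{a_{\e}} = z_{\al^{*}} + \CO(\e)$, using the $C^{k+1}$ regularity of $\beta$), the triangle inequality gives $|\f(0,\e) - z_{\al^{*}}| = \CO(\e)$, which is the stated conclusion. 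The only mildly subtle step is the singular-value estimate needed for $(iii)$; every other piece is a direct IFT application, so I do not anticipate a real obstacle.
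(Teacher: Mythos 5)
Your proof is correct and follows exactly the route the paper intends: the corollary is stated as the specialization of Theorem \ref{PStf} to a simple zero of $f_r$, and your verification of hypotheses $(i)$--$(iii)$ (the IFT branch $a_\e=\al^*+\CO(\e)$ for $(ii)$, matching Proposition \ref{c1}$(a)$, and the singular-value bound giving $l=r\leq(k+r+1)/2$ for $(iii)$) is the natural way to fill in the details the paper leaves implicit. No issues.
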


Theorem \ref{PStf} is still true when $m=n$. In this case, we assume that
$V$ is an open subset of $\R^n$ then $\CZ =\ov V \subset \Omega$ and the
projections $\pi$ and $\pi^\perp$ become the identity and the null
operator, respectively. Moreover, the bifurcation functions
$f_i:V\rightarrow \R^n$, for $i=1,2,\dots,k$, become the averaged functions $f_i(\al)=\bg_i(\al)$ defined in \eqref{eqgi}. 

Consider $m=n$,  $z_\al=\al\in\CZ$ and the hypothesis {\bf H1.}  Thus, the result of Theorem  \ref{PStf} holds without any assumption about $\Delta_\al$. Thus, we have the following corollary.

\begin{corollary}\label{cte}
Assume that $\bg_i= 0$ for all $i\in \{ 0,\dots,k-1\}$. If
there exists ${z}^*\in \Omega$ such that  $\bg_{k}({z}^*)=0$ and
$\partial \bg_{k}({z}^*)\neq 0$, then there exists a $T$-periodic solution
$\f(t,\e)=x(t,{z}(\e),\e)$ for the differential equation \eqref{s1} such that
$\f(0,\e)={z}^*+\CO(\e)$.
\end{corollary}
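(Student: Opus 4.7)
The plan is to derive Corollary~\ref{cte} as a direct specialization of Theorem~\ref{PStf} to the case $m=n$. First, I would unpack how that setting simplifies: when $m=n$, the set $V$ is open in $\R^n$, $\CZ=\ov V$, and the projections $\pi,\pi^\perp$ reduce to the identity and the zero map respectively. Consequently, the function $\beta$, the matrices $\Gamma_\al,\Delta_\al$, and the auxiliary sequence $\ga_i(\al)$ all drop out of the definitions in~\eqref{fi}, and the bifurcation functions collapse to $f_i(\al)=\bg_i(\al)$. In particular, the reduced function becomes
\[
\CF^k(\al,\e)=\sum_{i=1}^k\e^i\bg_i(\al).
\]

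Next, I would verify the three hypotheses of Theorem~\ref{PStf}. Since by assumption $\bg_1=\cdots=\bg_{k-1}\equiv 0$ and $\partial\bg_k(z^*)\neq 0$, the function $\bg_k$ is not identically zero, so hypothesis~$(i)$ holds with $r=k$. For hypothesis~$(ii)$, I would simply take $a_\e:=z^*$ for every $\e$, which gives $\CF^k(z^*,\e)=\e^k\bg_k(z^*)=0$. For hypothesis~$(iii)$, I would compute $\partial_\al\CF^k(z^*,\e)=\e^k\,\partial\bg_k(z^*)$ and, reading the nondegeneracy condition as the nonsingularity of $\partial\bg_k(z^*)$, pick a constant $P_0>0$ with $|\partial\bg_k(z^*)\al|\geq P_0|\al|$, so that
\[
\bigl|\partial_\al\CF^k(z^*,\e)\cdot\al\bigr|\geq P_0|\e|^k|\al|.
\]
This yields the exponent $l=k$, and the required inequality $l\leq(k+r+1)/2$ reads $k\leq k+\tfrac12$, which is satisfied.

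With the three hypotheses in place, Theorem~\ref{PStf} furnishes a $T$-periodic solution $\f(t,\e)=x(t,z(\e),\e)$ of~\eqref{s1} with $|\f(0,\e)-z_{a_\e}|=\CO(\e^{k+1-l})=\CO(\e)$; since $a_\e\equiv z^*$, this is precisely $\f(0,\e)=z^*+\CO(\e)$, as claimed. The whole argument is essentially a routine reduction of Theorem~\ref{PStf}, so there is no substantial technical obstacle. The one point that genuinely requires care is interpreting the hypothesis $\partial\bg_k(z^*)\neq 0$ in the nondegenerate sense (i.e.\ as the invertibility of the Jacobian), because that is exactly what supplies the uniform lower bound used in~$(iii)$; once this reading is granted, everything else follows by a direct substitution into $\CF^k(\al,\e)=\e^k\bg_k(\al)$.
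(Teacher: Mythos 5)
Your proposal is correct and follows essentially the same route as the paper, which obtains Corollary~\ref{cte} by specializing Theorem~\ref{PStf} (equivalently, Corollary~\ref{1popct1a}) to the case $m=n$, where $\pi=\mathrm{id}$, $\pi^{\perp}=0$, and $f_i=\bg_i$; your explicit verification of hypotheses $(i)$--$(iii)$ with $r=l=k$ and $a_\e\equiv z^*$ is exactly the intended reduction. You are also right that $\partial\bg_k(z^*)\neq 0$ must be read as invertibility of the Jacobian (as in Corollary~\ref{1popct1a}), since hypothesis $(iii)$ fails for a singular matrix.
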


\begin{remark}\label{rmk}
In the next section we shall study the stability of the periodic solutions provided by Theorem \ref{PStf}. In order to have all information needed we present here  a sketch of the proof of Theorem \ref{PStf}. For a detailed proof see \cite[Section $3$]{CLN17}. The proof starts by using the projections $\pi$ and $\pi^\perp$ for transforming the equation $d(z,\e)=0$ into the equivalent system of equations
\begin{equation*}
\left\{\begin{array}{r}
\pi d(a,b,\e)=0,\vspace{0.1cm}\\
\pi^{\perp}d(a,b,\e)=0,
\end{array}\right.
\end{equation*}
and, then, dealing with the lines, separately.  Accordingly, hypothesis {\bf H2} provides the existence of a $\C^{k+1}$
function $\ov\be:U\times(-\e_2,\e_2)\rightarrow\R^{n-m}$, defined on a neighbourhood $U\times(-\e_2,\e_2)$ of $\ov V\times\{0\}$, satisfying $\pi^{\perp}d(a,\ov{\be}(a,\e),\e)=0$ for
each $(a,\e)\in U\times(-\e_2,\e_2)$ and $\ov{\be}(\al,0)=\be(\al)$
for all $\al\in\ov V$. The function $\ov\be$ can be expanded in Taylor series around $\e=0$ as
\begin{equation}\label{b0}
\beta (\al,\e)=\beta(\al)+\e
\ga_1(\al)+\cdots+\ga_{k}(\al)+\CO(\e^{{k}+1}),
\end{equation}
where $\ga_i$, for $1\leq i\leq {k}$, is defined in \eqref{fi}.  
 Now, hypotheses $(i)$ and $(ii)$  provide the
existence of $\e_2>0$ and a function $\al:(-\e_2,\e_2)\rightarrow \R^m$ satisfying $\pi d(\al(\e),{\be}(\al(\e),\e),\e)=0$ for all $\e\in(-\e_2,\e_2)$. Moreover
\begin{equation*}
\al(\e)=a_\e+\CO(\e^{{k}-l+1}).
\end{equation*}
Consequently, taking $z(\e)=\left( \al(\e),{\be}(\al(\e),\e)\right)$, we conclude that
$d(z(\e),\e)=0$ for all $\e\in(-\e_2,\e_2)$. Hence, for $|\e|\neq0$ sufficiently small, $\varphi(t,\e)=x(t,z(\e),\e)$ is a $T-$periodic solution of the differential equation \eqref{s1}. 

We are going to use the information provided by the functions $\al(\e)$, $\gamma_i(\al)$ and  $f_i(\al)$ in order to study the stability of the periodic solution $\varphi(t,\e)=x(t,z(\e),\e)$.
\end{remark}
Under the hypothesis of Corollary \ref{1popct1a}  we can use the Fa\'a di Bruno formula and the implicit function theorem for obtaining the coefficients of the expansion in power series of the function $a_{\e}$ present in statement $(ii)$ of Theorem \ref{PStf}. These expressions are required for obtaining the formulae to be developed in the next section.
\begin{proposition} \label{c1}  Consider $\Delta_\al$, $r$, and $\al^*\in V$ satisfying the hypothesis of Corollary \ref{1popct1a}. Thus, there exists $a_\e$ such that the following statements hold:
\begin{itemize}
\item[$(a)$]For $\e>0$ sufficiently small, there exists a $\C^k$ function $a_\e$ such that $\CF^{k}(a_\e,\e)=0$,
\item[$(b)$]$a_\e =\al_0+\e \al_1+\cdots+\e^{k}\al_{k}+\CO(\e^{k+1})$
with $\al_i \in \R^n$ for all $1\leq i \leq k$,  where the coefficients are given by
\[
\begin{aligned}
\al_0&=\al^*,\\
\al_{1}&=-\partial  f_r
(\al^*)^{-1}f_{r+1}(\al^*),\\
\al_i&=-\dfrac{1}{i!} \partial  f_r(\al^*)^{-1}\Bigg(
\sum_{S'_i}\dfrac{1}{c_1!\,c_2!2!^{c_2}\cdots
c_{i-1}!(i-1)!^{c_{i-1}}}\p^{I'}f_r(\al^*)\bigodot_{j=1}^{i-1}
\left(\p^{j}\al(0)\right)^{c_j}\vspace{0.2cm}\\
&+ \sum_{l=1}^{i-1}\sum_{S_l}\dfrac{1}{c_1!\,c_2!2!^{c_2}\cdots
c_l!l!^{c_l}}\p^{L}f_{i-l+r}(\al^*)\bigodot_{j=1}^l\left(\p^{j}\al(0)\right)^{c_j}
\Bigg), \,\,\, 2\leq i\leq k-1.
\end{aligned}
\]
\end{itemize}
\end{proposition}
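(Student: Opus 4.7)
The plan is to apply the Implicit Function Theorem to produce the branch $a_\e$ and then identify its Taylor coefficients by matching powers of $\e$ in the bifurcation equation via Fa\`a di Bruno's formula.

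For item (a), the standing hypotheses give $f_1 \equiv \cdots \equiv f_{r-1} \equiv 0$, so the bifurcation equation factors as $\CF^k(\al,\e) = \e^r G(\al,\e)$ with $G(\al,\e) = \sum_{j=0}^{k-r}\e^j f_{r+j}(\al)$, a $C^k$ function on a neighborhood of $(\al^*,0)$. Since $G(\al^*,0)=f_r(\al^*)=0$ and $\p_\al G(\al^*,0)=\p f_r(\al^*)$ is invertible by the hypothesis of Corollary \ref{1popct1a}, the Implicit Function Theorem yields a unique $C^k$ branch $a_\e$ with $a_0=\al^*$ and $G(a_\e,\e)\equiv 0$; multiplying by $\e^r$ gives $\CF^k(a_\e,\e)\equiv 0$, as required.

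For item (b), I would write $a_\e = \al_0 + \e\al_1 + \cdots + \e^k \al_k + \CO(\e^{k+1})$ with $\al_i = \p^i a_\e(0)/i!$, expand each composition $f_{r+l}(a_\e)$ around $\al^*$ using Fa\`a di Bruno's formula, substitute into $G(a_\e,\e)\equiv 0$, and equate the coefficient of each power of $\e$ to zero. In the coefficient of $\e^i$, the highest-order unknown $\p^i a(0)$ enters linearly through exactly one term of the Fa\`a di Bruno expansion of $f_r(a_\e)$, namely the tuple $(0,\ldots,0,1)\in S_i$, contributing $\tfrac{1}{i!}\p f_r(\al^*)\,\p^i a(0)$. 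The remaining tuples of $S_i$ are precisely those with $c_i=0$, which form the index set $S_i'$. Isolating $\p^i a(0)$, applying $\p f_r(\al^*)^{-1}$, and reindexing the contributions coming from $f_{r+l}$ with $l\geq 1$ via $l'\mapsto i-l$ produces the stated recursion; the base case $i=1$ is immediate because $S_1'=\emptyset$, giving $\al_1 = -\p f_r(\al^*)^{-1} f_{r+1}(\al^*)$.

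The main obstacle is purely combinatorial: tracking which tuple in $S_i$ carries the unknown $\p^i a(0)$, identifying the leftover tuples with $S_i'$, and verifying that the reindexing $l\mapsto i-l$ aligns the double sum with the form stated in the proposition. No new analytic input is required beyond the Implicit Function Theorem, and the smoothness class $C^k$ of the branch is inherited directly from the smoothness of $G$.
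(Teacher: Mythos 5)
Your proposal is correct and follows essentially the same route as the paper: the paper also obtains $(a)$ from the Implicit Function Theorem and proves $(b)$ by applying its Lemma \ref{l1}, whose proof is exactly the Fa\`a di Bruno expansion and isolation of the top-order derivative $\p^i\al(0)$ (entering linearly through $\p f_r(\al^*)$) that you describe inline. Your explicit factoring $\CF^k(\al,\e)=\e^r G(\al,\e)$ before invoking the Implicit Function Theorem is a detail the paper glosses over, and it is the correct way to make statement $(a)$ rigorous.
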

 
The next lemma combine the implicit function theorem and the Fa\'a di Bruno formulae, and it will be applied for proving Proposition \ref{c1}.

\begin{lemma}\label{l1}
Let $u:\R^n\times [0,\e_0]\rightarrow \R^n$ be a function of class
$\C^k$ such that
\begin{equation*}\label{sumu}
u(x,\e)=u_0(x)+\e  u_1(x)+\dots+\e^k u_k(x)+\CO(\e^{k+1}).
\end{equation*}
Assume that there exists a function $v:\R \rightarrow \R^n$ of class
$\C^k$ satisfying $u(v(\e),\e)=0$ for $|\e|>0$ sufficiently small
and that the Jacobian matrix $\p u_0(v(0))$ is invertible. Then
$$
v(\e)=v(0)+\dfrac{\e}{1!}\p v(0)\,+\dots+\dfrac{\e^k}{k!}\p^{k}v
(0)+\CO(\e^{k+1}),
$$
where the Taylor's coefficients of $v(\e)$ are written recursively as
\linebreak $\p v(0)=\partial  u_0(v(0))^{-1}u_1(v(0))$ and for $2\leq i
\leq k$,
\begin{align*}
v^{i}(0)=&-\partial  u_0(v(0))^{-1}\Bigg(
\sum_{S'_i}\dfrac{1}{c_1!\,c_2!2!^{c_2}\cdots
c_{i-1}!(i-1)!^{c_{i-1}}}\p^{I'}u_0(v(0))\bigodot_{j=1}^{i-1}\left(
\p^{j}v(0)\right)^{c_j}\vspace{0.2cm}\nonumber\\
&+ \sum_{l=0}^{i-1}\sum_{S_l}\dfrac{1}{c_1!\,c_2!2!^{c_2}\cdots
c_l!l!^{c_l}}\p^{L}u_{i-l}(v(0))\bigodot_{j=1}^l\left(\p^{j}v(0)\right)^{c_j}\Bigg).
\end{align*}
\end{lemma}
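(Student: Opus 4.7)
\textbf{Proof proposal for Lemma \ref{l1}.}

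The plan is to interpret the identity $u(v(\e),\e)\equiv 0$ as providing, after $i$ successive differentiations in $\e$ and evaluation at $\e=0$, a linear equation in $\p^{i}v(0)$ whose invertible coefficient is $\p u_0(v(0))$ and whose inhomogeneity involves only lower-order derivatives $\p^{j}v(0)$ with $j<i$. The existence and $\C^k$ regularity of a branch $v$ with $u(v(\e),\e)=0$ near $\e=0$ are already assumed (and would in any event follow from the classical implicit function theorem at the point $(v(0),0)$, since $\p_x u(v(0),0)=\p u_0(v(0))$ is invertible), so the only task is to derive the recursive formula.

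First I would substitute the expansion of $u$ in $\e$ into the identity, writing
\[
0\equiv u(v(\e),\e)=\sum_{j=0}^{k}\e^{j}u_j(v(\e))+\CO(\e^{k+1}).
\]
Applying Leibniz's rule to each summand and using $\p_\e^i(\e^j f(\e))\big|_{\e=0}=\frac{i!}{(i-j)!}f^{(i-j)}(0)$ when $j\le i$ and $0$ otherwise, I obtain after reindexing by $l=i-j$ the identity
\[
0=\sum_{l=0}^{i}\frac{i!}{l!}\,\p_\e^{l}\bigl[u_{i-l}(v(\e))\bigr]_{\e=0}, \qquad i=1,\dots,k.
\]

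Second, I would apply the Fa\`a di Bruno formula (in the multivariate form used throughout the paper) to each composed derivative $\p_\e^{l}[u_{i-l}(v(\e))]_{\e=0}$, expressing it as a sum over the partition set $S_l$:
\[
\frac{1}{l!}\p_\e^{l}\bigl[u_{i-l}(v(\e))\bigr]_{\e=0}=\sum_{S_l}\frac{1}{c_1!\,c_2!\,2!^{c_2}\cdots c_l!\,l!^{c_l}}\,\p^{L}u_{i-l}(v(0))\bigodot_{j=1}^{l}\bigl(\p^{j}v(0)\bigr)^{c_j}.
\]
The term carrying $\p^{i}v(0)$ can only appear when $l=i$ and $(c_1,\dots,c_i)=(0,\dots,0,1)$, in which case it contributes exactly $\tfrac{1}{i!}\p u_0(v(0))\,\p^{i}v(0)$; and the remaining partitions inside $S_i$ are precisely those with $c_i=0$, i.e.\ tuples $(c_1,\dots,c_{i-1})$ with $c_1+2c_2+\cdots+(i-1)c_{i-1}=i$, which by definition form $S'_i$ (with $I'=L'=c_1+\cdots+c_{i-1}$). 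Splitting the $l=i$ sum in this way separates the unknown $\p^{i}v(0)$ from all the lower-order data.

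Finally, isolating $\p^{i}v(0)$ and inverting $\p u_0(v(0))$ yields the displayed recursion. For $i=1$ the only partitions are the singletons in $S_0$ and $S_1$, recovering $\p v(0)=-\p u_0(v(0))^{-1}u_1(v(0))$; for $i\ge 2$ the $l=i$ piece of $S_i\setminus\{(0,\ldots,0,1)\}$ yields the $S'_i$-sum, while the $l\le i-1$ pieces yield the second sum in the lemma. The main book-keeping obstacle is reconciling the $i!/l!$ factors coming from the $\e^j$-differentiation with the $l!$ factors coming from Fa\`a di Bruno and with the multinomial coefficients $1/(c_1!\cdots l!^{c_l})$, and verifying the bijection $S_i\setminus\{(0,\dots,0,1)\}\leftrightarrow S'_i$; once these combinatorial identifications are made carefully, the formula in the statement follows without further difficulty.
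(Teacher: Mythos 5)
Your proposal is correct and follows essentially the same route as the paper's proof: differentiate $u(v(\e),\e)\equiv 0$ via Leibniz's rule, reindex by $l=i-j$, apply Fa\`a di Bruno to each $\p_\e^{l}[u_{i-l}(v(\e))]$, split off the unique partition in $S_i$ carrying $\p^{i}v(0)$ (leaving the $S'_i$-sum), and invert $\p u_0(v(0))$. Your explicit check of the sign in the first-order case, $\p v(0)=-\p u_0(v(0))^{-1}u_1(v(0))$, is in fact the correct one (consistent with Proposition \ref{c1}), so no gap remains.
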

\begin{proof}[Proof of Lemma \ref{l1}]
For $\e>$ sufficiently small we have by hypothesis that
$u(v(\e),\e)=0$, then  for $1\leq i \leq k$ we obtain
$$
\dfrac{d^i}{d
\e^i}u(v(\e),\e)=\sum^i_{j=0}\sum^i_{q=0}\binom{i}{q}(\e^{j})^
{(i-q)}\dfrac{d^q}{d \e^q}u_j(v(\e))+\CO(\e)=0.
$$
Taking $\e=0$, $l=i-j$ and using the Fa\'{a} di Bruno's Formula we
have
\begin{align*}
0=&\sum^i_{l=1}\dfrac{i!}{l!}\dfrac{d^l}{d \e^l}u_{i-l}(v(0))+i!u_i(v(0))\\
=&\dfrac{d^i}{d \e^i}u_0(v(0))+\sum^{i-1}_{l=1}\dfrac{i!}{l!}
\dfrac{d^l}{d \e^l}u_{i-l}(v(0))+i!u_i(v(0))\\
=&\Bigg(\sum_{S'_i}\dfrac{i!}{c_1!\,c_2!2!^{c_2}\cdots
c_{i-1}!(i-1)!^{c_{i-1}}}\p^{I'}u_0(v(0))\bigodot_{j=1}^{i-1}
\left(\p^{j}v(0)\right)^{c_j}\Bigg)\nonumber\\
&+i! \partial u_0(v(0))v^{i}(0)+
\Bigg(\sum_{l=0}^{i-1}\sum_{S_l}\dfrac{i!}{c_1!\,c_2!2!^{c_2}\cdots
c_l!l!^{c_l}}\p^{L}u_{i-l}(v(0))\bigodot_{j=1}^l\left(\p^{j}v(0)\right)^{c_j}\Bigg)\\
&+i!u_i(v(0)).
\end{align*}
Then, we isolate $v^{i}(0)$ obtaining
\begin{align*}
v^{i}(0)&=-\partial u_0(v(0))^{-1}\Bigg(\sum_{S'_i}\dfrac{i!}{c_1!\,c_2!2!^{c_2}\cdots
c_{i-1}!(i-1)!^{c_{i-1}}}\p^{I'}u_0(v(0))\bigodot_{j=1}^{i-1}
\left(\p^{j}v(0)\right)^{c_j}\\
&+\sum_{l=0}^{i-1}\sum_{S_l}\dfrac{i!}{c_1!\,c_2!2!^{c_2}\cdots
c_l!l!^{c_l}}\p^{L}u_{i-l}(v(0))\bigodot_{j=1}^l\left(\p^{j}v(0)\right)^{c_j}+u_i(v(0))\Bigg).
\end{align*}
\end{proof}
\begin{proof}[Proof of Proposition \ref{c1}]
Statement $(a)$ follows directly from the Implicit Function
Theorem. The  statement $(b)$ can be proved as follows. Define the function
$$
u(\al, \e)=\CF^{k}(\al,\e)=\sum_{i=1}^{k} \e^{i} f_i(\al),
$$
where $u_i(\al)=f_{i}(\al)$ for all $0<i<k$ are given in
\eqref{fi}. Then, from statement $(a)$ we have $u(a_\e, \e)=0$ and by
hypothesis we know that $\p u_0(\al_0)$ is invertible. Thus, we apply
Lemma \ref{l1} on $u(\al, \e)$ by taking $v(\e)=a_\e$, obtaining the
functions shown in statement $(b)$.
\end{proof}
\subsection{Asymptotic expansion of the Jacobian matrix of a periodic solution}
In the previous sections, a method for determine the stability of the periodic solution $\varphi(t,\e)$ defined in {\bf H3} was presented. The proposed approach involves the asymptotic expansion of the Jacobian matrix $\partial_z\Pi(\varphi(0,\e),\e)$ presented in \eqref{eqc}. The next result provide the formulae for the coefficients of this expansion.
 
\begin{proposition}\label{p6} Let  $a_\e$ be the function given in hypothesis $(ii)$ of Theorem $\ref{PStf}$ and
let $\varphi(t,\e)=x(t,\varphi(0,\e),\e)$  be the periodic solution of the differential equation \eqref{s1} provided by Theorem  \ref{PStf}. If
\begin{equation*}
a_\e=\al_0+\e \al_1+\cdots+\e^{k-l}\al_{k-l}+\CO(\e^{k-l+1}),
\end{equation*}
with $\al_i \in \R^m$ for all $0\leq i \leq k-l$. Then, the following statement hold:
\begin{itemize}
\item[(a)] the initial condition of the periodic orbit writes
\begin{equation}\label{z0}
\varphi(0,\e)=\sum_{i=0}^{k-l}\e^i\big(\al_i,\beta_i\big)+\CO\big(\e^{k-l+1}\big),
\end{equation}
where $\beta_0=\beta(\al_0)$ and for all $1\leq i \leq k-l$,
\begin{equation*}
\beta_i=\ga_i(\al_0)+\sum_{j=1}^i\sum_{S_j}\dfrac{1}{c_1!\,c_2!2!^{c_2}\cdots
c_j!j!^{c_j}}\p^{j}\ga_{i-j}(\al_0)\bigodot_{s=1}^j (s!\al_s)^{c_s}.
\end{equation*}
\item[(b)] the Jacobian matrix of the  Poincar\'e map  at $z=z(\e)$ can be written as
\begin{equation*}
\partial_z\Pi(\varphi(0,\e),\e)=A_0+\e  A_1+\dots+\e^{k-l} A_{k-l}+\e^{k-l+1} R(\e).
\end{equation*}
Where the matrices $A_j$ satisfies, for all $0\leq j\leq k-l$, the following relationship. We have $A_0=\partial_z y_0(T,z)=\partial_z\bx(T,z,0)$ and
\begin{equation*}
A_j=\sum^j_{i=0}\dfrac{1}{(j-i)!}\sum_{S_i}\dfrac{1}{b_1!\cdots
b_i!(i-1)!^{b_i}}\partial^{I+1}_zy_{j-i}(T,z_0)\bigodot^i_{u=1}
\left(u!z_u\right)^{b_u},
\end{equation*}
for $1\leq j\leq k-l$, with $z_i=(\al_i,\beta_i)$  given in \eqref{z0} and $l$ as in Theorem $\ref{PStf}$.
\end{itemize}
\end{proposition}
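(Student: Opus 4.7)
The plan is to establish part (a) by reconstructing $\varphi(0,\e)$ from the Lyapunov--Schmidt output recalled in Remark \ref{rmk}, and then to derive part (b) by differentiating the Poincar\'{e} map from Lemma \ref{popl1} and composing with the expansion produced in (a). Both steps are applications of the Fa\`a di Bruno formula, organized by the index sets $S_l$ and $S'_l$ already fixed in the paper.

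For statement (a), recall from Remark \ref{rmk} that the periodic solution constructed by Theorem \ref{PStf} has initial condition
$$
\varphi(0,\e)=\bigl(\al(\e),\ov\be(\al(\e),\e)\bigr),
$$
where $\ov\be(\al,\e)=\be(\al)+\e\,\ga_1(\al)+\cdots+\e^k\ga_k(\al)+\CO(\e^{k+1})$ is the function given by \eqref{b0} and $\al(\e)=a_\e+\CO(\e^{k-l+1})$. The first $m$ coordinates of $\varphi(0,\e)$ therefore coincide with $a_\e$ up to order $k-l$, which yields the $\al_i$ part. For the last $n-m$ coordinates, I would substitute the expansion of $a_\e$ into $\ov\be(\cdot,\e)$, and then apply Fa\`a di Bruno to the composition $\e\mapsto \ga_{i-j}(a_\e)$. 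Collecting all contributions of order $\e^i$ and grouping them via the index set $S_j$ (with the standard weights $1/(c_1!\,c_2!2!^{c_2}\cdots c_j!j!^{c_j})$) reproduces the claimed formula for $\beta_i$; the $\ga_i(\al_0)$ term is the one that comes directly from the explicit coefficient of $\e^i$ in $\ov\be$, while the sum over $1\leq j\leq i$ collects the Fa\`a di Bruno contributions of the lower-order $\gamma$'s evaluated at $a_\e$.

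For statement (b), by Lemma \ref{popl1} the Poincar\'{e} map equals
$$
\Pi(z,\e)=\sum_{i=0}^{k}\e^{i}\,\frac{y_i(T,z)}{i!}+\CO(\e^{k+1}),\qquad y_0(T,z)=x(T,z,0),
$$
so differentiation in $z$ gives $\partial_z\Pi(z,\e)=\sum_{i=0}^k \e^i\,\partial_z y_i(T,z)/i!+\CO(\e^{k+1})$. Now I would substitute the expansion \eqref{z0} of $\varphi(0,\e)$ obtained in part (a) into each matrix-valued function $z\mapsto \partial_z y_{j-i}(T,z)$ and apply Fa\`a di Bruno once more, this time in the form written in \eqref{smoothyi}: higher-order derivatives of $\partial_z y_{j-i}$ at $z_0=(\al_0,\be_0)$ get multiplied by symmetric tensor powers $\bigodot_{u=1}^i (u!z_u)^{c_u}$ with indices ranging over $S_i$. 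Matching the coefficient of $\e^j$ in $\partial_z\Pi(\varphi(0,\e),\e)$ then produces exactly the asserted formula for $A_j$, with $A_0=\partial_z y_0(T,z_0)$ coming from the zeroth order term.

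The only real obstacle is keeping the combinatorial indexing consistent: the Fa\`a di Bruno expansion must be indexed in the same way as in the definitions \eqref{smoothyi} and \eqref{fi}, so that the shift $\partial^{I+1}_z$ and the factors $(u!z_u)^{b_u}$ emerge with the correct weights when one differentiates $\partial_z y_{j-i}(T,\varphi(0,\e))$ with respect to $\e$. Once that bookkeeping is carried out it is routine, and no further analytic input beyond Lemma \ref{popl1}, the expansion \eqref{b0}, and the identity $\al(\e)=a_\e+\CO(\e^{k-l+1})$ is needed.
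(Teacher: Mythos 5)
Your proposal is correct and follows essentially the same route as the paper: part (a) is obtained by writing $\varphi(0,\e)=(\al(\e),\ov\be(\al(\e),\e))$ with $\al(\e)=a_\e+\CO(\e^{k-l+1})$ and expanding $\ov\be(\al(\e),\e)$ via Leibniz and Fa\`a di Bruno, and part (b) by differentiating the expansion of $\Pi$ from Lemma \ref{popl1}, substituting the expansion from (a), and applying the same combinatorial machinery. The paper's proof is precisely the explicit execution of this bookkeeping, so no idea is missing.
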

 
The remaining of this section is dedicated to the proof of this result.
\begin{proof}[Proof of Proposition \ref{p6}]
For proving statement $(a)$ we define
$$
g(z,\e)=\bg_0(z)+\e \bg_{1}(z)+\e^2 \bg_{2}(z)+\dots+\e^{k}\bg_k(z)+\CO(\e^{k+1}),
$$
by Theorem \ref{PStf} (see Remark \ref{rmk}) there are functions $\al(\e)$ and $\ov{\beta}(\al,\e)$ such that 
$$
z_{\al(\e)}=\left(\al(\e), \ov{\beta}(\al(\e),\e) \right),
$$ 
satisfies $g(z_{\al(\e)},\e)=0$ for  all $|\e|\neq $ sufficiently small. Furthermore, taking $\ov k = k$ we have  that  $\al(\e)=a_\e+\CO(\e^{\ov k -l+1})$ and 
\begin{equation}\label{b0}
\ov{\beta}(\al,\e)=\beta(\al)+\e\gamma_1(\al)+\cdots+\gamma_{\ov k }(\al)+\CO(\e^{\ov k +1}). 
\end{equation}
By hypothesis  $a_\e=\al_0+\e \al_1+\cdots+\e^{\ov k -l}\al_{\ov k -l}+\CO(\e^{\ov k -l+1})$ thus we can write
\begin{equation}\label{a1}
\al(\e)=\al_0+\e \al_1+\cdots+\e^{k-l}\al_{\ov k -l}+\CO(\e^{\ov k -l+1}).
\end{equation}
Substituting \eqref{a1} in \eqref{b0} and expanding the result in
power series of $\e$ around $\e=0$ we have
\begin{equation*}\label{bb}
\ov{\beta}(\al(\e),\e)=\beta_0+\e
\beta_1+\cdots+\e^{\ov k -l}\beta_{\ov k -l}+\CO\big(\e^{\ov k -l+1}\big),
\end{equation*}
where the coefficients up to order $\ov k-l$ can be obtained by the recursive formula $\beta_0=\ov{\beta}(\al(0),0)=\beta(\al_0)$ and for $0< i \leq
\ov k -l$
\begin{align*}
\beta_i=&\dfrac{1}{i!}\sum^i_{j=0}\sum^i_{q=0}\binom{i}{q}(\e^{j})^
{(i-q)}\dfrac{d^q}{d \e^q}\ga_j(\al(\e))+\CO(\e)\Big|_{\e=0}\\
=&\dfrac{1}{i!}\Bigg(i!\ga_i\big(\al(\e)\big)+\sum^i_{j=1}\binom{i}{i-j}
\dfrac{d^{i-j}}{d\e^{i-j}}\ga_j(\al(\e))+\CO(\e)\Bigg)\Big|_{\e=0}.
\end{align*}
Using  the  Fa\'{a} di Bruno's formula  and taking $l=i-j$ the above expression becomes
\begin{align*}
\beta_i=\ga_i\big(\al(0)\big)+\sum^i_{l=1}\sum_{S_l}\dfrac{1}
{c_1!\cdots
c_l!(l-1)!^{c_l}}\p^{L}\ga_{i-l}(\al(0))\bigodot^l_{s=i}\left(
\p^{s}\al(0)\right)^{c_s},
\end{align*}
where $\al(0)=\al_0$ and $\p^{s}\al(0)=s!\al_s$. This conclude the proof Proposition \ref{p6}$(a)$. 

Now we prove statement $(b)$, in this case the displacement function \eqref{fdm} writes
$$
d(z,\e)=\sum^{k}_{i=0}\e^{i}\dfrac{y_{i}(T,z)}{i!}+
\CO(\e^{k+1}).
$$
Thus the Poincar\'e map of of the system is
$$
\Pi(z,\e)=x(T,z,0)+\sum^{k}_{i=1}\e^{i}\dfrac{y_{i}(T,z)}{i!}+
\CO(\e^{k+1}).
$$
At $z=z(\e)$ this function has the Jacobian matrix
$$
\partial_z Pi(z(\e),\e)=\partial_z x(T,z,0)+\sum^{k}_{i=1}\dfrac{\e^{i}}{i!}
\partial_zy_{i}(T,z(\e))+\CO(\e^{k+1}).
$$
From Proposition \ref{p6}$(a)$ we have that
$z(\e)=\sum^{k-l}_{i=0}z_i+\CO(\e^{k-l})$. Thus, we calculate the
first $k-l$ coefficient of the Taylor expansion of the function
$$
J(\e)=\partial_z x(T,z(\e),0)+\sum^{k}_{i=1}\dfrac{\e^{i}}{i!}\partial_zy_{i}
(T,z(\e))+\CO(\e^{k+1}),
$$
where $J(0)=\partial_zy_{0}(T,z_0)$. By the Leibniz rule we have
\begin{align*}
\dfrac{d^j}{d\e^j}J(\e)&=\sum^{k}_{i=0}\dfrac{\e^{i}}{i!}
\dfrac{d^j}{d\e^j}\left(\partial_zy_{i}(T,z(\e))\right)+
\CO(\e^{k-j+1})\\
&=\sum^j_{i=0}\sum^j_{n=0}\binom{j}{n}\left(\e^i\right)^{n}
\dfrac{d^{j-n}}{d\e^{j-n}}\left(\partial_zy_{i}(T,z(\e))\right)
+\CO(\e)\Big|_{\e=0}.
\end{align*}
When $\e=0$ the only non-vanishing  terms in the above equation will
be those satisfying $i=n$, then we have
\begin{align}\label{est2}
\dfrac{d^j}{d\e^j}\partial_zd(z(\e),\e)=\sum_{n=0}^j\dfrac{j!}{n!(j-n)!}
\dfrac{d^{j-n}}{d\e^{j-n}}\left(\partial_zy_{n}(T,z(\e))
\right).
\end{align}
Using the  Fa\'{a} di Bruno's Formula again we have that
$$
\dfrac{d^i}{d\e^i}\left(\partial_zy_{n}(T,z(\e))
\right)\Big|_{\e=0}=\sum_{S_i}\dfrac{i!}{b_1!\cdots
b_i!(i-1)!^{b_i}}\partial^{I+1}_zy_{n}(T,z(\e))\bigodot^i_{u=1}
\left(\p^{u}z(\e)\right)^{b_u}\Big|_{\e=0}.
$$
Substituting the above equation in \eqref{est2} and taking $i=j-n$ we obtain
\begin{align*}
\ov{A}_j&=\dfrac{d^j}{d\e^j}\partial_zd(z(\e),\e)\Big|_{\e=0}\\
&=\sum^j_{i=0}\dfrac{j!}{i!(j-i)!}\sum_{S_i}\dfrac{i!}{b_1!\cdots
b_i!(i-1)!^{b_i}}\partial^{I+1}_zy_{j-i}(T,z_0)\bigodot^i_{u=1}
\left(u!z_u\right)^{b_u}.
\end{align*}
Finally we take $A_j=j!\ov{A}_j$. This completes the proof.
\end{proof}

\section{Application: Hopf bifurcation in $4$-dimensional polynomial system}\label{sec:app}
We use Theorem \ref{teoa} for studying the stability of a periodic solution resulting from a Hopf-Hopf bifurcation near resonance.
\begin{theorem}\label{teo:expA}
Consider the $4-$dimensional polynomial system 
\begin{equation}\label{expA}
\begin{array}{ccccc} 
\dot{x}&=& -\omega\, y&+&\e\, P_1(x,y,z)+\e^2 Q_1(x,y,z,b),\\ 
\dot{y}&=&\omega\, x&+&\e \,P_2(x,y,z)+\e^2 Q_2(x,y,z,c),\\ 
\dot{z}&=&-\omega\, w&+&\e \,P_3(x,y,z)+\e^2 Q_3(x,y,z,d),\\
\dot{w}&=&\omega\, z&+&\e\, P_4(x,y,z)+\e^2 Q_4(x,y,z,e).
\end{array} 
\end{equation}
Where $\omega=\frac{54 \pi }{7}$ and
\begin{align*}
&P_1(x,y,z)=\frac{1}{7} \left(2 w^2 (x+y+z)+w \left(2 x^2+2 y^2+2 z^2+1\right)+2 z \left(x^2+y^2\right)\right.\\
&\left.+2 z^2 (x+y)+2 x y (x+y)+x-28 y+z\right),\\ 
& \\
&Q_1(x,y,z,b)=b x \left(w^2+z^2\right)+w^2 (y+z)+2 w \left(x^2+y^2+z^2\right)+y \left(x^2+y z\right),\\
& \\
&P_2(x,y,z)=\frac{1}{7} \left(w^2 (2 x-5 y+2 z)+w \left(2 x^2+2 y^2+2 z^2+1\right)+2 z \left(x^2+y^2\right)\right.\\
&\left.+z^2 (2 x-5 y)+2 x y (x+y)+x+y+z\right),\\
& \\
&Q_2(x,y,z,c)=x^2 (c z+y+z)+y z (c y+2 z)+w^2 (x+y+z)+x \left(2 y^2+z^2\right),\\
& \\
&P_3(x,y,z)=\frac{1}{3052}\Big(w^2 (872 (y+z)-11719 x)+436 w \left(2 x^2+2 y^2+2 z^2+1\right)\\
&+872 x^2 (y+z)+x \left(436-24310 y^2-11719 z^2\right)+ (2 y (z (y+z)-14)+z)\Big),\\
& \\
&Q_3(x,y,z,d)=w^2 (d z+x+y)+d y^2 z+2 w \left(x^2+y^2+z^2\right)+x^2 y+x z^2,\\
& \\
&P_4(x,y,z)=w^2 \left(\frac{2 x}{7}+\frac{2021 y}{6104}+\frac{2 z}{7}\right)+\frac{1}{7} w \left(-34 x^2+2 y^2-34 z^2+1\right)\\
&+\frac{2}{7} x^2 (y+z)+\frac{2}{7} x \left(y^2+z^2\right)+\frac{1}{7} (x+y+z)+\frac{y z (1744 y+2021 z)}{6104},\\
& \\
&Q_4(x,y,z,e)=w^2 (e\, z+x+y)+x^2 (e\, z+y+z)+x \left(2 y^2+z^2\right)+y z (y+2 z).
\end{align*}
We also define
\begin{align*}
R(b,c,d,e)&=\frac{45308619078085195}{476090594062466256} b-\frac{72977802317402731}{476090594062466256} c\\
&+\frac{2947109871432167}{238045297031233128} d+\frac{3376284095107}{545975451906498}e\\
&+\frac{2185436011713349666848 \pi -3790812222100505261383}{9963623952539293805568 \pi }.
\end{align*} If $R(b,c,d,e)\neq0$ then, for $\e>0$ sufficiently small, this system has an isolated periodic solution bifurcating from the origin. Such a periodic solution is asymptotically stable provided that $R(b,c,d,e)<0$, and unstable provided that $R(b,c,d,e)>0$.
\end{theorem}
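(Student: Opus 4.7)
The plan is to apply Theorem \ref{teoa} with $m=n=4$. The unperturbed system consists of two decoupled harmonic oscillators of common frequency $\omega=54\pi/7$, so every solution is $T$-periodic with $T=2\pi/\omega=7/27$. Hypothesis \textbf{H1} therefore holds with $\CZ=\R^4$, and because $m=n$, Hypothesis \textbf{H2} is vacuous. Moreover $Y(T,z)\equiv I_4$, so the Jacobian of the Poincar\'e map reduces to $\p_z\Pi(z,\e)=I_4+\CG_k(z,\e)+\CO(\e^{k+1})$, and the eigenvalue analysis is governed entirely by $\A(\e)=\CG_k(\f(0,\e),\e)$.

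First, I would compute the averaged function $\bg_1(z)$ by integrating $F_1$ along the explicit rotational flow of the unperturbed system. A direct check gives $F_1(0)=0$, so $\bg_1(0)=0$; on the other hand $\p\bg_1(0)$ is expected to be singular, since otherwise Theorem \ref{thm:intro} would settle stability without involving the second-order parameters $b,c,d,e$. Consequently, I would use the Lyapunov--Schmidt reduction of Theorem \ref{PStf} together with Proposition \ref{c1} to construct the smooth branch $\f(0,\e)=\al_0+\e\al_1+\cdots$ with $\al_0=0$, where the higher coefficients are recovered recursively from the bifurcation functions $f_i$ built out of $\bg_1,\bg_2,\ldots$ for \eqref{expA}. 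This realises Hypothesis \textbf{H3} with $\al^*=0$.

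Next, applying Proposition \ref{p6}(b), I would expand $\A(\e)=\e\,A_1+\e^2 A_2+\cdots$ and identify the index $\ell$ of the first non-vanishing coefficient. The tailored choice of the resonant frequency $\omega=54\pi/7$ and of the coefficients in the $P_i$'s is designed so that $A_\ell$ has no eigenvalue in the right half-plane but carries a critical pair on the imaginary axis (in addition to the trivial eigenvalue $0$ associated with the phase direction of the autonomous flow, which corresponds to the unavoidable Floquet multiplier $1$). Since Theorem \ref{teoa}(a)--(b) with $\mu=0$ is then inconclusive for the critical pair, I would pass to the $\mu$-jet $J_\mu(\e^{-\ell}\A(\e))=A_\ell+\e A_{\ell+1}+\cdots$ and, as in the proof of Theorem \ref{teoa}, use the Implicit Function Theorem on the characteristic polynomial to produce the smooth branch $\la(\e)$ of the critical eigenvalue. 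The scalar $R(b,c,d,e)$ is then expected to coincide, up to a strictly positive factor, with $\Re(J_\mu\la(\e))/\e^{\mu}$ for the smallest $\mu$ at which this real part becomes non-zero, so its sign decides asymptotic stability or instability via Theorem \ref{teoa}.

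The main obstacle is computational rather than conceptual. Evaluating $\bg_1,\bg_2,\ldots$ requires integrating trigonometric polynomials along $x(t,z,0)$ over $[0,T]$; assembling $\f(0,\e)$ via Proposition \ref{c1} and then $A_1,A_2,\ldots$ via Proposition \ref{p6} involves extensive Fa\`a di Bruno bookkeeping; and finally one must carry out the eigenvalue perturbation on $A_\ell+\e A_{\ell+1}+\cdots$. All of these are symbolic manipulations of trigonometric and rational expressions, which are in principle routine but must be handled in a computer algebra system because of their size. Once the identification of $R(b,c,d,e)$ with the real part of the relevant eigenvalue correction is verified by a finite symbolic comparison, Theorem \ref{teoa}(a) yields asymptotic stability when $R(b,c,d,e)<0$ and Theorem \ref{teoa}(b) yields instability when $R(b,c,d,e)>0$, completing the proof.
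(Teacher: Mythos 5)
There is a genuine gap, and it is structural rather than computational. System \eqref{expA} is \emph{autonomous}, so you cannot apply Theorem \ref{teoa} with $m=n=4$ to the time-$T$ Poincar\'e map of the $4$D system: the perturbed periodic orbit has period $T+\CO(\e)\neq T=2\pi/\omega$, so it is not a fixed point of the time-$T$ map and Hypothesis {\bf H3} fails in that framework; and even granting a fixed point, the monodromy matrix of a periodic orbit of an autonomous flow always carries the trivial Floquet multiplier $1$, so $\A(\e)$ would always have an eigenvalue whose jet has identically zero real part and condition (a) of Theorem \ref{teoa} could never be met. You acknowledge this ``trivial eigenvalue'' but do not remove it; Theorem \ref{teoa} as stated cannot conclude asymptotic stability while it is present. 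The paper's proof eliminates the phase direction at the outset by passing to cylindrical coordinates $x=\rho\cos\theta$, $y=\rho\sin\theta$, $z=r\cos(\theta+\al)$, $w=r\sin(\theta+\al)$ and taking $\theta$ as the new independent variable, which yields a genuinely non-autonomous $2\pi$-periodic $3$D system \eqref{eqA1} in the standard form ($F_0=0$), to which Theorem \ref{teoa} with $m=n=3$ applies cleanly.

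The second problem is the locus of the bifurcation. Reading ``bifurcating from the origin'' literally, you set $\al^*=0$; but the origin of $\R^4$ is an equilibrium of \eqref{expA} for all $\e$ (all $P_i$ and $Q_i$ vanish there), so your construction would track the trivial constant solution, not the periodic orbit of the theorem. In the reduced $(\rho,r,\al)$ variables the relevant zero of the first averaged function is $(\rho_0,r_0,\al_0)=(1,1,\pi/2)$, i.e.\ the orbit bifurcates from an invariant circle of the unperturbed linear flow, not from the point $0\in\R^4$. Moreover, this zero is \emph{simple} ($\det\p\bg_1=-246677/1271376\neq0$), so no Lyapunov--Schmidt reduction or Proposition \ref{c1}-type degeneracy handling is needed, contrary to your expectation that $\p\bg_1$ must be singular. (In the $4$D framework your expectation is in fact correct for a different reason --- the $S^1$-equivariance forces any nontrivial zero of the $4$D averaged function to be non-isolated --- which is precisely why the reduction to $3$D is unavoidable.) The only genuine subtlety, which you do anticipate correctly, is that $A_1=\p\bg_1(1,1,\pi/2)$ has a purely imaginary conjugate pair besides the eigenvalue $-1$, so one must compute $A_2$ (where $b,c,d,e$ enter through $\bg_2$) and apply Theorem \ref{teoa} with $\mu=1$; the sign of $R(b,c,d,e)$ is the sign of the real part of the $1$-jet of that critical pair.
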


\begin{proof}
Using the cylindrical coordinates $x=\rho \cos(\theta)$, $y=\rho \sin(\theta)$, $z=r\cos(\theta+\al),$ and $w=r\sin(\theta+\al)$, we transform system \eqref{teo:expA} into a new differential system where  $\dot{\theta}=\omega+\CO(\e)$. Then,  taking $\theta$ as the new independent variable, we obtain a non-autonomous $2\pi$-periodic system in the standard form
\begin{equation}\label{eqA1}
\begin{split}
\dfrac{d\rho}{d \theta}=& \e F_{11}(\theta,\rho, r, \al)+\e^2 F_{21}(\theta,\rho, r, \al)+\CO(\e^3),\\
\dfrac{d r}{d \theta}=& \e F_{12}(\theta,\rho, r, \al)+\e^2 F_{22}(\theta,\rho, r, \al)+\CO(\e^3),\\
\dfrac{d \al}{d \theta}=&\e F_{13}(\theta,\rho, r, \al)+\e^2 F_{23}(\theta,\rho, r, \al)+\CO(\e^3),
\end{split}
\end{equation}
see the Appendix for the complete expressions. In order to apply Corollary \ref{cte}, we use the formulae \eqref{smoothyi} for computing the averaging function
\[
\bg_1(\rho,r,\al)=\left(g^1_1(\rho,r,\al),g^2_1(\rho,r,\al),g^3_1(\rho,r,\al)\right),
\]
where
\begin{align*}
g^1_1(\rho,r,\al)&=\frac{1}{54} \left(r \cos (\alpha ) \left(4 \rho ^2+r^2+2\right)+\rho  \left(\rho ^2-3 r^2+2\right)\right),\\
g^2_1(\rho,r,\al)&=\frac{1}{54} \left(29 \rho  \sin (\alpha )-8 r^3+9 \rho ^2 r \cos (2 \alpha )-14 \rho ^2 r+2 r\right)\\
&-\frac{\rho  \cos (\alpha )}{47088} \left(11719 \rho ^2+21417 r^2-1744\right),\\
g^3_1(\rho,r,\al)&=-\frac{29 \rho +r \left(r^2+2\right) \sin (\alpha )}{54 \rho }+\frac{\rho }{47088 r}\Big(\sin (\alpha ) \big(11719 \rho ^2+17929 r^2
\\
&-1744\big)-872 \cos (\alpha ) (18 \rho  r \sin (\alpha )-29)\Big).
\end{align*}
Thus, it is easy to see that $(\rho_0,r_0,\al_0)=\left(1,1,\dfrac{\pi}{2}\right)$ is a simple zero of the first averaging function, i.e.,
\[
\bg_1(\rho_0,r_0,\al_0)=0 \quad \mbox{and} \quad \det\left( D \bg_1(\rho_0, r_0, \al_0)\right)=-\dfrac{246677}{1271376}.
\]
Therefore, it follows from Corollary \ref{cte} that there exists a periodic solution $\f(t,\e)$ for system \eqref{eqA1}. In order to determine the stability of this solution we compute the second averaging function
\begin{equation*}\label{eqg2}
\bg_2(\rho,r,\al)=\left(g^1_2(\rho,r,\al),g^2_2(\rho,r,\al),g^3_2(\rho,r,\al)\right),
\end{equation*}
the complete expressions for the above functions can be found in the Appendix. Accordingly, we have the following expression for the initial condition of the periodic solution $\f(t,\e)$
\begin{align*}
\f(0,\e)&=(\rho_0, r_0,\al_0)+\e(\rho_1, r_1,\al_1)+\CO(\e^2) \quad \mbox{where}\\
 (\rho_1, r_1,\al_1)&=-\p\bg_1(\rho_0, r_0,\al_0)^{-1}\bg_2(\rho_0, r_0,\al_0).
\end{align*}
Which can be used for computing $\A(\e)=\CG_k(\f(0,\e),\e)$ obtaining
\[
\A(\e)=\e A_1+\e^2 A_2+\CO(\e^3),
\]
with
\begin{align*}
A_1&=\partial \bg_1(\rho_0, r_0,\al_0),\\
A_2&=\partial \bg_2(\rho_0, r_0,\al_0)+\partial ^2\bg_1(\rho_0, r_0,\al_0).(\rho_1, r_1,\al_1).
\end{align*}
Computing the eigenvalues of the jet $J_1(\e^{-1}\A(\e))=A_1+\e A_2$ we obtain
\begin{align*}
J_1(\lambda_1(\e))&= R(b,c,d,e)+\CO(\e)+i\left(\frac{1}{108}\sqrt{\frac{246677}{109}}+\CO(\e) \right),\\
J_1(\lambda_2(\e))&= R(b,c,d,e)+\CO(\e)-i\left(\frac{1}{108}\sqrt{\frac{246677}{109}}+\CO(\e) \right),\\
J_1(\lambda_3(\e))&=-1+\CO(\e).
\end{align*}
Consequently, it follows from Theorem \ref{teoa} that the periodic solution $\f(t,\e)$ will be asymptotically  stable if $R(b,c,d,e)<0$ and unstable if $R(b,c,d,e)>0$. The result then follows by going back through the change of variables.
\end{proof}

\begin{example}Consider system \eqref{expA} with $b=\dfrac{1}{250}$, $c=150$, $d=-1$ and $e=-1$. Then,  we have $R\left(\dfrac{1}{250},150,-1,-1\right)=-22.913$, consequently system \eqref{expA} has a stable periodic solution as shown in the Figure \eqref{exA}.
\begin{figure}[H]
	\begin{overpic}[width=12cm]{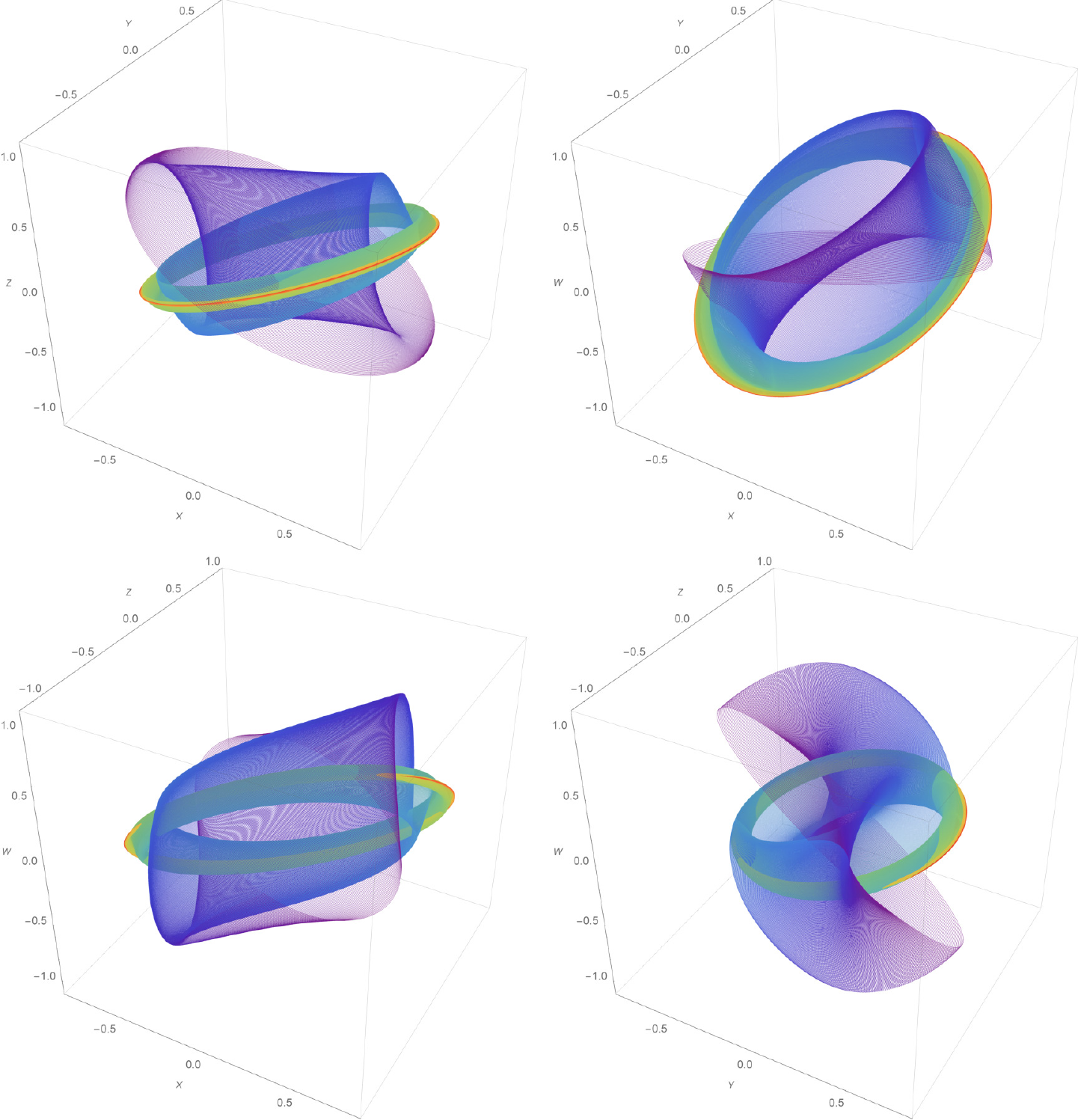}
	\end{overpic}
	\caption{Solution starting at $\left(0.7638,0,-0.4716,0.2242 \right)$ being atracted by the stable periodic solution detected by Theorem \ref{teo:expA}. Here, $\e=\dfrac{1}{45}$.}
		\label{exA}
\end{figure}
\end{example}

The next result is an application of Theorem \ref{teob} to a family of 4-dimensional differential systems. 
\begin{theorem}\label{texB} Consider the $4-$dimensional polynomial system
\begin{equation}\label{expb}
\begin{aligned} 
\dot{x}=& \, y+\e\left(\dfrac{2 x^3}{7}-x^2 z-x y^2+x z^2-\dfrac{91 x}{10}+\dfrac{2 z^3}{7}\right),\\ 
\dot{y}=&-\, x+\e\left(\dfrac{2 x^3}{7}-x^2 z-x y^2+x z^2+\dfrac{607 y}{70}+\dfrac{2 z^3}{7}\right),\\ 
\dot{z}=&\e\left(b \,y z+\dfrac{2 x^3}{7}-x^2 z-x y^2+x z^2+\dfrac{2 z^3}{7}+\dfrac{54 z}{7}\right),\\
\dot{w}=&- w+\e\left(\dfrac{2 x^3}{7}-x^2 z-x y^2+x z^2+\dfrac{2 z^3}{7}\right),
\end{aligned} 
\end{equation}
 where $\e>0$ and $b$ are real parameters. For $\e>0$ sufficiently small, this differential system has two isolated periodic solutions $\varphi_{\pm} (t,\e)$ bifurcating from $(4,0,\pm 1)$ respectively. Moreover, consider 
 $$
 R(b)=\dfrac{\pi  (16485\,b-122880 \pi -157337) }{3920}\neq 0.
 $$
If $R(b)>0,$ then $\varphi_+(t,\e)$ will be stable and $\varphi_-(t,\e)$ unstable. Otherwise, $\varphi_+(t,\e)$ will be unstable and $\varphi_-(t,\e)$ stable.
\end{theorem}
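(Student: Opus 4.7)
The plan is to reduce \eqref{expb} to a three-dimensional non-autonomous $2\pi$-periodic system in the standard form of \eqref{s1} and then apply Theorem \ref{teob} with $m=2$ and $n=3$. Since only the $(x,y)$-part of the unperturbed flow oscillates harmonically, polar coordinates $x=\rho\cos\theta$, $y=-\rho\sin\theta$ give $\dot\theta = 1 + \CO(\e)$, and after passing to $\theta$ as the independent variable one obtains a three-dimensional system in $(\rho,z,w)$ whose unperturbed flow is $(\rho_0,z_0,w_0 e^{-\theta})$. Its $2\pi$-periodic orbits form the manifold $\CZ=\{(\rho_0,z_0,0)\}$, so \textbf{H1} holds with $\al=(\rho_0,z_0)$ and $\beta\equiv 0$. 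A direct computation gives $Y_0(2\pi)=\diag(1,1,e^{-2\pi})$, so $\Gamma\equiv 0$ and $\Delta\equiv e^{-2\pi}-1\neq 0$, verifying \textbf{H2}. Hypothesis \textbf{H4} is immediate because the unique eigenvalue of $I_{n-m}+\Delta-\p\beta\,\Gamma$ equals $e^{-2\pi}$, which lies strictly inside the unit disc and differs from $1$; moreover, the conjugating matrix $L$ in \eqref{ML} collapses to the identity.

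Next I would locate the bifurcating periodic solutions via Corollary \ref{1popct1a}. Because $\Gamma=0$, the first-order bifurcation function reduces to $f_1(\rho_0,z_0) = \pi\,\bg_1(\rho_0,z_0,0)$, produced by the integral in \eqref{smoothyi} along the rotating unperturbed flow. Routine trigonometric integration should exhibit $f_1$ as a polynomial vector field in $(\rho_0,z_0)$ whose common zeros in $\rho_0>0$ include $(4,\pm 1)$, with nonsingular Jacobian at each (expected to have trace zero and strictly positive determinant). Corollary \ref{1popct1a} then produces the branches $\varphi_\pm(t,\e)$ bifurcating from $(\rho_0,z_0,w_0)=(4,\pm 1,0)$ in the reduced coordinates, which correspond to $(x,y,z,w)=(4,0,\pm 1,0)$ in the original coordinates of \eqref{expb}.

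For stability I would apply Theorem \ref{teob}. With $L=I$, the blocks in \eqref{mp1} are obtained by partitioning $Y_\e(T)-Y_0(T)+\CG_k(\varphi_\pm(0,\e),\e)$ into $2\times 2$, $2\times 1$, $1\times 2$, and $1\times 1$ pieces, and the Schur complement in \eqref{NM} defines $M(\e)$ and $N(\e)$. Since $N(0)=e^{-2\pi}$ lies strictly inside the unit disc, condition (a1) is automatic and no branches $\omega_i(\e)$ appear. The leading coefficient $M_1=\p f_1(4,\pm 1)$ has purely imaginary simple eigenvalues, so the stability is not decided at first order and one must pass to the $1$-jet $J_1(\e^{-1}M(\e))=M_1+\e M_2$. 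Using Proposition \ref{p6}(a) together with Proposition \ref{c1} to expand $\varphi_\pm(0,\e)$ around $(4,\pm 1,0)$ up to first order in $\e$, and Proposition \ref{p6}(b) for the corresponding Jacobian, $M_2$ is assembled from $\bg_2$ (obtained from Lemma \ref{popl1}) and the derivatives of $\bg_1$ at the bifurcation points. Perturbing the characteristic polynomial of $M_1+\e M_2$ around each purely imaginary eigenvalue and reading off $\Re(J_1\lambda(\e))$ should yield a constant multiple of $R(b)$, with opposite signs for $\varphi_+$ and $\varphi_-$ because the only $b$-dependent contribution comes from the term $b\,yz$ in $\dot z$, which changes sign under $z_0\mapsto -z_0$.

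The main obstacle is the length of the second-order averaging. Although each step is algorithmic, computing $\bg_2$ via the recursive formula of Lemma \ref{popl1} (including the quadratic-in-$y_1$ corrections indexed by $S_2'$) along the two-parameter family of unperturbed orbits, tracking everything through the Schur complement in \eqref{NM}, and expanding the resulting $2\times 2$ characteristic polynomial is bookkeeping-intensive; a computer algebra system is effectively required to collect the coefficients into the explicit form $R(b) = \pi(16485\,b - 122880\pi - 157337)/3920$. The factor of $\pi^2$ in $R(b)$ originates from the iterated integral in $y_2$, whereas the $b$-linear term is produced solely by the $b\,yz$ term in the $\dot z$ equation.
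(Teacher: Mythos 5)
Your proposal follows essentially the same route as the paper: cylindrical reduction to a $2\pi$-periodic system in $(\rho,z,w)$ with $m=2<n=3$, verification of \textbf{H1}--\textbf{H4} with $\beta\equiv 0$, $\Gamma=0$, $L=I_3$, existence of $\varphi_\pm$ via Corollary \ref{1popct1a} from the simple zeros $(4,\pm1)$ of $f_1$, and stability via Theorem \ref{teob} with $N(0)=e^{-2\pi}$ killing the $\omega$-branches and the trace-zero, positive-determinant $M_1$ forcing passage to the first-order jet, whose roots have real part proportional to $R(b)$. The only thin spot is your symmetry heuristic for why $\varphi_-$ gets the opposite sign (the system has no exact $z\mapsto -z$ symmetry, so this must be checked by redoing the second-order computation at $(4,-1)$, which is all the paper does as well), but you correctly flag the whole second-order step as requiring computer algebra.
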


\begin{proof}
 Using the cylindrical coordinates $x=\rho \sin(\theta)$, $y=\rho \cos(\theta)$, we can transform system \eqref{expb} into a new system with $\dot{\theta}=1+\CO(\e)$. Then,  we can take $\theta$ as the new independent variable obtaining
\begin{equation}\label{Sint1}
\begin{split}
\dfrac{d\rho}{d \theta}=& \e F_{11}(\theta,\rho, z, w)+\e^2 F_{21}(\theta,\rho,z, w)+\CO(\e^3),\\
\dfrac{d z}{d \theta}=& \e F_{12}(\theta,\rho, z, w)+\e^2 F_{22}(\theta,\rho, z, w)+\CO(\e^3),\\
\dfrac{d w}{d \theta}=&- W+\e F_{13}(\theta,\rho, z, w)+\e^2 F_{23}(\theta,\rho, z, w)+\CO(\e^3),
\end{split}
\end{equation}
where  the functions 
$$
F_i(\theta,\rho,z, w)=\left(F_{i1}(\theta,\rho, z, w),F_{i2}(\theta,\rho,z, w),F_{i3}(\theta,\rho, z, w)\right)
$$ are smooth  and $2\pi$ periodic in $\theta$  for $i=1,2$.  Precisely, we have 
\begin{align*}
F_{11}(\theta,\rho, z, w)=&\left(-10 \sin (\theta ) \left(9 \rho ^3 \cos (3 \theta )+8 z^3\right)-4 \rho  \cos (2 \theta ) \left(10 \rho ^2+35 z^2-622\right)\right.\\
+&5 \rho  \left(\rho ^2-28 z^2+12\right)+10 \cos (\theta ) \left(\rho  \sin (\theta ) \left(\rho ^2-28 z^2+28 \rho  z \cos (\theta )\right)-8 z^3\right. \\
+&\left.\left.21 \rho ^2 z\right)+5 \rho ^2 (14 z \cos (3 \theta )-9 \rho  \cos (4 \theta ))\right)\dfrac{1}{280},\\
F_{12}(\theta,\rho,z, w)=&\dfrac{1}{28}\left(\rho  \left(\cos (\theta ) \left(\rho ^2-28 z^2\right)-28 b\, z \sin (\theta )-9 \rho ^2 \cos (3 \theta )+14 \rho  z \cos (2 \theta )\right)\right.\\
-&\left.8 z \left(z^2+27\right)+14 \rho ^2 z\right),\\
F_{13}(\theta,\rho, z, w)=&\frac{1}{280\rho}\left(10 \cos (\theta ) \left(\rho ^4-7 \rho ^2 (z (3 w+4 z))+8 w z^3\right)-45 \rho ^3 w \sin (4 \theta )\right.\\
+&45 \rho ^3 w \cos (4 \theta )+2 \rho  w \sin (2 \theta ) \left(25 \rho ^2-70 z^2+1244\right)+10 w z \sin (\theta ) \\
\times&\left(7 \rho ^2-8 z^2\right)+20 \rho  \cos (2 \theta ) \left(7 w z^2+\rho ^2 (2 w+7 z)\right)+20 \rho  z^2 (7 w-4 z)\\
+&\left.70 \rho ^2 w z \sin (3 \theta )-10 \rho ^2 \cos (3 \theta ) \left(9 \rho ^2+7 w z\right)-5 \rho ^3 (w-28 z)\right),
\end{align*}
The higher-order functions $F_{21}$,$F_{21}$, and $F_{21}$ have big expressions that will be omitted. The unperturbed part of system \eqref{expb} is
\[
\dfrac{d\rho}{d \theta}=0,\quad
\dfrac{d z}{d \theta}=0,\quad
\dfrac{d w}{d \theta}=- w,
\]
and clearly, its solution is given by
$$
x\left(\theta,(\rho,z,w),0\right)=\left( \rho, z,w\,e^{-\theta}\right).
$$
The unperturbed system has a 2-dimensional manifold of periodic solutions. Therefore,  the hypothesis {\bf H1} holds by taking $m=2$ and
$$
\CZ=\{z_\al=(\al_1,\al_2,0)\,: \, \al_1 \, \in\, (\al_1,\al_2)\in \ov V\},\quad V=  (\rho_0,\rho_1)\times(-z_0,z_0), \quad  \beta(\al_1,\al_2)=0.
$$
It will be sufficient to take $\rho_0=3,$ $\rho_1=5$, and $z_0=2.$
Moreover, it is easy to verify that {\bf H2} also holds. Indeed,  we have
$$
\bg_0(\rho,z,w)=\left( 0,\,0,\,\left(e^{2 \pi }-1\right) w\right),
$$
and the fundamental matrix becomes
\begin{align*}
\dfrac{\partial \bg_0}{\partial(\rho,z,w)}(2 \pi)=&Y_0(2 \pi,(\rho,z,w))-I_3=\begin{pmatrix}
 0 & 0 & 0 \\
 0 & 0& 0 \\
 0 & 0 & e^{-2 \pi }-1
\end{pmatrix}.
\end{align*}
We have directly that, $\det\Delta(\al)=e^{-2 \pi }-1\neq 0$.

In order to verify the hypothesis {\bf H3}, we apply Corollary \ref{1popct1a}.The formulae \eqref{eqgi} and \eqref{fi} as provide us the expressions
\begin{align*}
 \bg_1(\rho,z,w)=&Y_0(t,(\rho,z,w))\int_{0}^tY_0(s,(\rho,z,w))^{-1}F_1(s,x(s,(\rho,z,w),0))ds\Big|_0^{2\pi},\\
 f_1(\al)=&-\G(\al) \De(\al)^{-1}\pi^\perp \bg_1(z_\al)+\pi \bg_1(z_\al),
\end{align*}
where
$$
\G(\al)=\begin{pmatrix}0 \\ 0 \end{pmatrix} \quad \mbox{and} \quad \Delta(\al)=e^{2 \pi }-1.
$$
Consequently, the first-order averaging function is given by
$$
\bg_1(\rho,z,w)=\left(g_{11}(\rho,z,w),g_{12}(\rho,z,w),g_{13}(\rho,z,w) \right),
$$
with
\begin{align*}
g_{11}(\rho,z,w)=&\frac{1}{28} \pi  \rho  \left(\rho ^2-28 z^2+12\right),\\
g_{12}(\rho,z,w)=&-\frac{1}{7} \pi  z \left(-7 \rho ^2+4 z^2+108\right),\\
g_{13}(\rho,z,w)=&\frac{1}{140}\left(e^{2 \pi } \left(5 \pi  w \left(28 z^2-\rho ^2\right)-2 \left(\rho ^3+20 z^3+35 \rho  z^2-42 \rho ^2 z\right)\right)\right.\\
+&\left.2 \left(\rho ^3+20 z^3+35 \rho  z^2-42 \rho ^2 z\right)\right).
\end{align*}
On the other hand, the bifurcation function becomes
$$
f_1(\al_1,\al_2)=\left( \frac{1}{28} \pi \al_1  \left(\al_1^2-28 {\al_2}^2+12\right), \pi \al_1 ^2 {\al_2}-\frac{4}{7} \pi  {\al_2} \left({\al_2}^2+27\right)\right).
$$
Since $f_1$ has two simple  roots in $V$, 
$$
\al_{\pm}^*= (4,\pm 1) ,
$$ 
it follows from Corollary \ref{1popct1a} (taking $r=1$), that there exist two periodic solutions $\varphi_\pm(t,\e)$ for the  differential system \eqref{Sint1} such that $\varphi_\pm(0,\e)\rightarrow (4,\pm 1,0)$ as $\e \rightarrow 0$.

In what follows, we are going to provide the stability conditions of the periodic solution $\varphi_+(t,\e)$ by using Theorem \ref{teob}. An analogous reasoning ca be used to study $\varphi_-(t,\e)$. 

First, we observe that hypothesis {\bf H4} is satisfied by computing
\begin{equation}\label{IDG}
I_1+\Delta(\al_+^*)-\partial\beta(\al_+^*)\Gamma(\al_+^*)= e^{-2 \pi },
\end{equation}
which has no multiple eigenvalues on the unitary circle and $1$ is not one of its eigenvalues.
Moreover, we use the formulae provided by Proposition \ref{p6}, to obtain the expression of the initial condition
$$
\varphi_+(0,\e)=(4,1,0)+\e\left(\frac{575 \,b}{96}+\frac{50671}{3360},\frac{617 \, b}{96}+\frac{94457}{3360},\frac{4  }{5}\right)+\CO(\e^2). 
$$
Therefore, from \eqref{B}, \eqref{LYIL}, and \eqref{mp1} we have that $L=I_3$, $Y_{\e}(2\pi)=Y_{0}(2\pi)$, and
\begin{align*}
A(\e)&=\e\left(
\begin{array}{cc}
\dfrac{8 \pi }{7}&8 \pi\\
-8 \pi&-\dfrac{8 \pi }{7} 
\end{array}
\right)+\CO(\e^2),\\
B(\e)&=\e\left(\begin{array}{c}
0\\
0 
\end{array}
\right)+\CO(\e^2),\\
C(\e)&=\e\left(\begin{array}{cc}
\frac{25}{7} \left(1-e^{-2 \pi }\right) &  \frac{334}{35} \left(e^{-2 \pi }-1\right)
\end{array}
\right)+\CO(\e^2),\\
D(\e)&=-\e \dfrac{3 e^{-2 \pi }}{7}  +\CO(\e^2).
\end{align*}

From here, an taking \eqref{mp1} into account we get 
\begin{align*}
N(\e)&=
e^{-2 \pi }+\CO(\e),
\\
M(\e)&=\e\begin{pmatrix}
\dfrac{8 \pi }{7}&8 \pi\\
-8 \pi&-\dfrac{8 \pi }{7} 
\end{pmatrix}+\CO(\e^2).
\end{align*}
Notice that, from expression \eqref{Mep}, $\ell=1.$ Accordingly, taking $\mu_1=0$ and $\mu_2=1$ we have from \eqref{cor} and \eqref{est} that
\begin{align*}
J_0^\e\CP(\omega,\e)&=e^{-2 \pi }-\omega,\\
J_3^\e\CQ(\lambda,\e)&=\left(\lambda ^2+\frac{3072 \pi ^2}{49}\right) \e ^2+ \left(\frac{\pi  (336665\,b+184320 \pi +1509827) \lambda }{2940}\right.\\
+&\left.\frac{5}{343} \pi ^2 (2877 \,b-37297)\right)\e ^3.
\end{align*}

Notice that, $J_0^\e\CP(\omega,\e)$ does not have branches of zeros starting at the unitary circle and the eigenvalue of \eqref{IDG} belongs to the interior of the unitary disk. Then condition $(a1)$ of Theorem \ref{teob} is verified.

Now, computing the roots $\lambda_1(\e)$ and $\lambda_2(\e)$ of $J_3^\e\CQ(\lambda,\e)$, we obtain
\begin{equation*}
\begin{split}
J_1\lambda_1(\e)=&  \frac{32\pi\sqrt{3}}{7} i+\e\left(\frac{\pi  (16485\,b-122880 \pi -157337) }{3920}-\frac{ \pi  (47075 \,b+474583)  }{980 \sqrt{3}}i\right),\\
J_1\lambda_2(\e)=&  -\frac{32\pi\sqrt{3}}{7} i+\e\left(\frac{\pi  (16485\,b-122880 \pi -157337) }{3920}+\frac{ \pi  (47075 \,b+474583)  }{980 \sqrt{3}}i\right).
\end{split}
\end{equation*}
Since
$$
\Re(J_1\lambda_i(\e))=\e\dfrac{\pi  (16485\,b-122880 \pi -157337) }{3920}=\e R(b),
$$
the condition $(a2)$ of Theorem \ref{teob} is verified provided that $R(b)<0$, which implies the stability of the periodic solution $\varphi_+(t,\e)$. On the other hand, the condition $(b2)$ of Theorem \ref{teob} is verified provided that $R(b)>0$, in this case the periodic solution is unstable.

The same argument can be used for proving the statement about $\varphi_-(t,\e)$.
\end{proof}

\begin{example}Consider system \eqref{expb} with the coefficients $b=-1.1267$,  $\e=1/50$ and consequently $R(b)=-913.4$. Figure \eqref{figB} shows the three-dimensional projections  solutions starting at $\left(4.166, 0,1.417,-0.016\right)$ and $\left(4.021, 0,-1.501,0.228\right)$. These solutions are attracted by the stable periodic solution $\varphi_+(t,\e)$ and leaves the neighborhood of the unstable periodic solution $\varphi_-(t,\e)$ respectively.

\begin{figure}[H]
	\begin{overpic}[width=10cm]{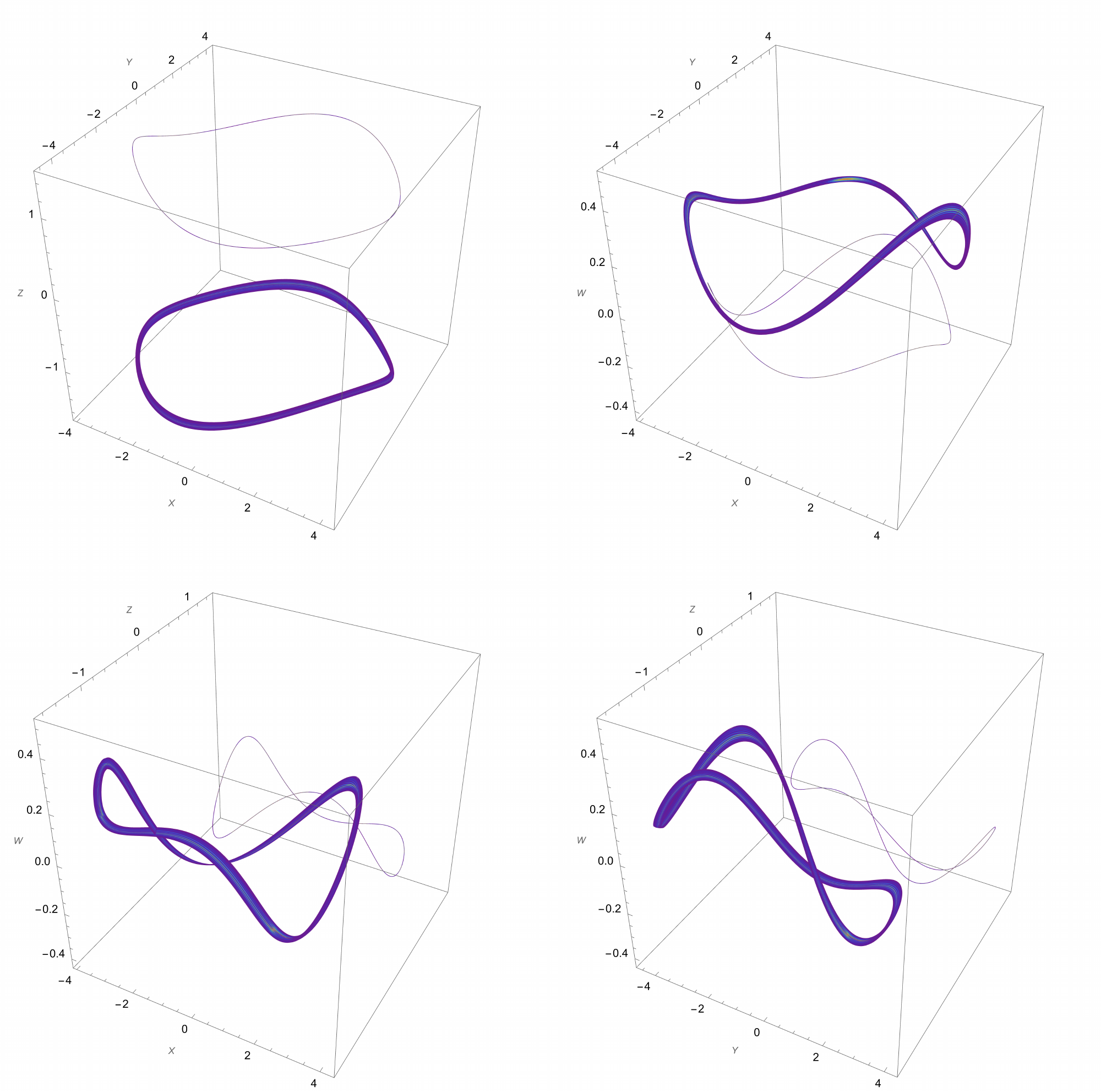}
	\end{overpic}
	\caption{Stable and Unstable periodic solutions in system \eqref{expb}.}
	\label{figB}
\end{figure}
\end{example}

\section*{Acknowledgements}

DDN is partially supported by S\~{a}o Paulo Research Foundation (FAPESP) grants 2022/09633-5, 2019/10269-3, and 2018/13481-0, and by Conselho Nacional de Desenvolvimento Cient\'{i}fico e Tecnol\'{o}gico (CNPq) grants 438975/2018-9 and 309110/2021-1. 

\bibliographystyle{abbrv}
\bibliography{references}

\begin{thebibliography}{10}

\bibitem{BL}
A.~Buic\u{a} and J.~Llibre.
\newblock Averaging methods for finding periodic orbits via {B}rouwer degree.
\newblock {\em Bull. Sci. Math.}, 128(1):7--22, 2004.

\bibitem{CLN17}
M.~R. C\^{a}ndido, J.~Llibre, and D.~D. Novaes.
\newblock Persistence of periodic solutions for higher order perturbed
  differential systems via {L}yapunov-{S}chmidt reduction.
\newblock {\em Nonlinearity}, 30(9):3560--3586, 2017.

\bibitem{candido2019new}
M.~R. C\^{a}ndido, J.~Llibre, and C.~Valls.
\newblock New symmetric periodic solutions for the {M}axwell-{B}loch
  differential system.
\newblock {\em Math. Phys. Anal. Geom.}, 22(2):Paper No. 16, 13, 2019.

\bibitem{candido2020periodic}
M.~R. C\^{a}ndido, D.~D. Novaes, and C.~Valls.
\newblock Periodic solutions and invariant torus in the {R}\"{o}ssler system.
\newblock {\em Nonlinearity}, 33(9):4512--4538, 2020.

\bibitem{candido22zero}
M.~R. C\^{a}ndido and C.~Valls.
\newblock Zero-{H}opf bifurcation in the general {V}an der {P}ol--{D}uffing
  equation.
\newblock {\em J. Geom. Phys.}, 179:Paper No. 104609, 18, 2022.

\bibitem{chicone06}
C.~Chicone.
\newblock {\em Ordinary differential equations with applications}, volume~34 of
  {\em Texts in Applied Mathematics}.
\newblock Springer, New York, second edition, 2006.

\bibitem{dilna2009uniqueness}
N.~Dilna and M.~Fe~kan.
\newblock On the uniqueness, stability and hyperbolicity of symmetric and
  periodic solutions of weakly nonlinear ordinary differential equations.
\newblock {\em Miskolc Math. Notes}, 10(1):11--40, 2009.

\bibitem{schur}
F.~R. Gantmacher.
\newblock {\em The theory of matrices. {V}ol. 1}.
\newblock AMS Chelsea Publishing, Providence, RI, 1998.
\newblock Translated from the Russian by K. A. Hirsch, Reprint of the 1959
  translation.

\bibitem{guckenheimer13}
J.~Guckenheimer and P.~Holmes.
\newblock {\em Nonlinear oscillations, dynamical systems, and bifurcations of
  vector fields}, volume~42 of {\em Applied Mathematical Sciences}.
\newblock Springer-Verlag, New York, 1990.
\newblock Revised and corrected reprint of the 1983 original.

\bibitem{LNT14}
J.~Llibre, D.~D. Novaes, and M.~A. Teixeira.
\newblock Higher order averaging theory for finding periodic solutions via
  {B}rouwer degree.
\newblock {\em Nonlinearity}, 27(3):563--583, 2014.

\bibitem{montgomery1986existence}
J.~T. Montgomery.
\newblock Existence and stability of periodic motion under higher order
  averaging.
\newblock {\em J. Differential Equations}, 64(1):67--78, 1986.

\bibitem{M06}
J.~Murdock.
\newblock {\em Normal forms and unfoldings for local dynamical systems}.
\newblock Springer Monographs in Mathematics. Springer-Verlag, New York, 2003.

\bibitem{MR2}
J.~Murdock and C.~Robinson.
\newblock A note on the asymptotic expansion of eigenvalues.
\newblock {\em SIAM J. Math. Anal.}, 11(3):458--459, 1980.

\bibitem{MR1}
J.~Murdock and C.~Robinson.
\newblock Qualitative dynamics from asymptotic expansions: local theory.
\newblock {\em J. Differential Equations}, 36(3):425--441, 1980.

\bibitem{N17}
D.~D. Novaes.
\newblock An equivalent formulation of the averaged functions via {B}ell
  polynomials.
\newblock In {\em Extended abstracts {S}pring 2016---nonsmooth dynamics},
  volume~8 of {\em Trends Math. Res. Perspect. CRM Barc.}, pages 141--145.
  Birkh\"{a}user/Springer, Cham, 2017.

\bibitem{novaes2020higher}
D.~D. Novaes.
\newblock Higher order stroboscopic averaged functions: a general relationship
  with {M}elnikov functions.
\newblock {\em Electron. J. Qual. Theory Differ. Equ.}, pages Paper No. 77, 9,
  2021.

\bibitem{Novaes2022}
D.~D. Novaes.
\newblock An averaging result for periodic solutions of {C}arath\'{e}odory
  differential equations.
\newblock {\em Proc. Amer. Math. Soc.}, 150(7):2945--2954, 2022.

\bibitem{NovaesSilva2021}
D.~D. Novaes and F.~B. Silva.
\newblock Higher order analysis on the existence of periodic solutions in
  continuous differential equations via degree theory.
\newblock {\em SIAM J. Math. Anal.}, 53(2):2476--2490, 2021.

\bibitem{SVM}
J.~A. Sanders, F.~Verhulst, and J.~Murdock.
\newblock {\em Averaging methods in nonlinear dynamical systems}, volume~59 of
  {\em Applied Mathematical Sciences}.
\newblock Springer, New York, second edition, 2007.

\end{thebibliography}

\end{document}